\theoremstyle{plain}
\newtheorem{theorem}{Theorem}[section]
\newtheorem{lemma}[theorem]{Lemma}
\newtheorem{proposition}[theorem]{Proposition}
\theoremstyle{definition}
\newtheorem{remark}[theorem]{Remark}
\newtheorem{example}[theorem]{Example}
\newcommand{\infinito}[1]{{}^\infty #1}
\theoremstyle{definition}
\newtheorem{definition}[theorem]{Definition}
\newcommand{\ZZ}{\mathbb{Z}}			
\newcommand{\NN}{\mathbb{N}}			
\newcommand{\diam}{{\rm diam}}
\newcommand{\e}{\varepsilon}
\renewcommand{\d}{\delta}
\newcommand{\vacio}{\tikz[baseline=.1em,scale=.4]{ 
\draw (-.1,0) -- (-.1,.85) -- (.8,.85) -- (.8,0) -- cycle; 
}}
\newcommand{\cero}{\tikz[baseline=.1em,scale=.4]{ 
\draw (-.1,0) -- (-.1,.85) -- (.8,.85) -- (.8,0) -- cycle ; 
\node at (.35,.425) {0};
}}
\newcommand{\uno}{\tikz[baseline=.1em,scale=.4]{ 
\draw (-.1,0) -- (-.1,.85) -- (.8,.85) -- (.8,0) -- cycle; 
\node at (.35,.425) {1};
}}
\newcommand{\dos}{\tikz[baseline=.1em,scale=.4]{ 
\draw (-.1,0) -- (-.1,.85) -- (.8,.85) -- (.8,0) -- cycle; 
\node at (.35,.425) {2};
}}
\newcommand{\cerof}{\tikz[baseline=.0001em,scale=.28]{
  \draw (-.35,-.2) -- (-.35,1.05) -- (1.05,1.05) -- (1.05,-0.2) -- cycle;
  \draw[->] (.95,.85) -- (-.25,.85); 
  \small
  \node at (.35,.325) {0};
}}
\newcommand{\unof}{\tikz[baseline=.0001em,scale=.28]{
  \draw (-.35,-.2) -- (-.35,1.05) -- (1.05,1.05) -- (1.05,-0.2) -- cycle;
  \draw[->] (.95,.85) -- (-.25,.85); 
  \small
  \node at (.35,.325) {1};
}}
\newcommand{\dosf}{\tikz[baseline=.0001em,scale=.28]{
  \draw (-.35,-.2) -- (-.35,1.05) -- (1.05,1.05) -- (1.05,-0.2) -- cycle;
  \draw[->] (.95,.85) -- (-.25,.85); 
  \node at (.35,.325) {2}; 
}}
\newcommand{\flechai}{\tikz[baseline=.1em,scale=.4]{ 
\draw (-.1,0) -- (-.1,.85) -- (.8,.85) -- (.8,0) -- cycle; 
\draw[->] (.7,.425) -- (0,.425);
}}
\title{Local non-periodic order and diam-mean equicontinuity on cellular automata}
\author{Luguis de los Santos Ba\~nos and Felipe Garc\'{\i}a-Ramos}
\date{}	
\begin{document}
\maketitle
\begin{abstract}
 Diam-mean equicontinuity is a dynamical property that has been of use in the study of non-periodic order. Using some type of ``local" skew product between a shift and an odometer looking cellular automaton (CA), we will show there exists an almost diam-mean equicontinuous CA that is not almost equicontinuous, (and hence not almost locally periodic). As an application we show that Kurka's dichotomy does not hold for diam-mean versions of sensitivity and equicontinuity.
 
 Previously we constructed a CA that is almost mean equicontinuous \cite{delossantos} but not almost diam-mean equicontinuous \cite{delossantosautomata}. 
\end{abstract}
\section{Introduction}

The study of non-periodic order, in the sense of finding and classifying order in non-periodic structures, has been a topic of interest among mathematicians, physicists and biologists for many years. For example, in 1940 Hedlund and Morse studied non-periodic sequences with minimal complexity which they called Sturmian sequences \cite{morse1940symbolic}. The role of dynamical systems in this line of research is natural, as it provides several formal ways to define order. 

Cellular automata (CA) were introduced by Ulam and von Neumann to model the evolution of cells. Cellular automata can be studied in the context of \textbf{topological dynamical systems (TDS)}, that is, the study of pairs $(X,T)$, where $X$ is a compact metric space and $T:X\rightarrow X$ a continuous function \cite{kuurka2003topological,ceccherini2010cellular}. CA are sometimes used as discrete models of differential equations \cite{toffoli1984cellular,toffoli1987cellular}. 

In the study of cellular automata (or symbolic dynamics in general) periodicity is linked to the concept of equicontinuity. A TDS is \textbf{equicontinuous} if the family $\{T^n\}_{n\in\NN}$ is equicontinuous. One may also study this notion locally. A point $x\in X$ is an \textbf{equicontinuity point} if the diameter of the images of a small ball around $x$ will always stay small, that is, if for every $\e>0$ there exists $\d >0$ such that 
$diam(T^{i}B_{\d}(x))<\e$ for every $i\in \NN$. We say a TDS is \textbf{almost equicontinuous} if the equicontinuity points are residual. Given a CA it is not difficult to check that a point is equicontinuous if and only if it is locally eventually periodic; that is, if every column is eventually periodic (Proposition \ref{prop:lep}). For CA, almost equicontinuity is more natural than equicontinuity since it can be used to classify CA using sensitivity to initial conditions (Theorem \ref{thm:dicotomiaCA}).
A weaker notion than equicontinuity, \textbf{diam-mean equicontinuity}, requires the diameter of small balls to stay small on average (see Definition \ref{def:diam-mean}). The notion of diam-mean equicontinuity has been used to characterize regularity properties of the maximal equicontinuous factor \cite{garciajagerye}, which are natural in the context of aperiodic order, and an even weaker property, mean equicontinuity (introduced in \cite{fomin,oxtoby}), has been shown to be connected with the concept of (measure-theoretic) discrete spectrum \cite{weakforms,huang2018bounded,lituye,fuhrmann2018structure} and almost periodic functions \cite{garcia2019mean} (for a survey on mean equicontinuity see \cite{lisurveymean}). Nonetheless, the view point of this paper is not quite the same as in the study of quasicrystals and aperiodic order as in \cite{baake2013aperiodic}, since almost diam-mean equicontinuous systems are only required to be ordered locally almost everywhere (not every point has to be diam-mean equicontinuous), and they may exhibit chaotic properties like positive entropy. 

In \cite{delossantos}, the authors constructed a CA (the Pacman CA) that is almost mean equicontinuous but not almost equicontinuous. It turns out this CA is not almost diam-mean equicontinuous \cite{delossantosautomata}. Hence, the question of whether there exists a CA that is almost diam-mean equicontinuous but not almost equicontinuous remained open. Taking some form of local skew-product between a very regular CA (similar to an odometer) and a very chaotic one (the shift map) we construct a CA that is almost diam-mean equicontinuous but not almost equicontinuous (Theorem \ref{thm:main}). Then, we construct a CA that is neither almost diam-mean equicontinuos nor diam-mean sensitive (Theorem \ref{thm:2}). Note that CA (not necessarily transitive) are almost equicontinuous or sensitive (Kurka's dichotomy \cite{kurka1997languages}).

In summary; almost equicontinuous CA always exhibit eventually periodic behaviour. CA are either almost equicontinuous or sensitive. There exist very chaotic CA, like the shift, which satisfy all the sensitivity-type properties. Nonetheless, among the sensitive CA, there exist almost diam-mean equicontinuous CA, and CA that are neither almost diam-mean equicontinuous nor diam-mean sensitive. Among diam-mean sensitive CA, there exist almost mean equicontinuous, and CA that are neither mean sensitive nor almost mean equicontinuous. 
\section{Preliminaries}
We say $(X,T)$ is a \textbf{topological dynamical system (TDS)} if $X$ is a compact metric space (with metric $d$) and $T:X\rightarrow X$ is a continuous function. Given a metric space $X$, we set $B_{\d}(x)=\{y\in X:d(x,y)<\d\}$, and we denote the diameter of a subset $A$ with $\diam (A)$.
A subset of a topological space is \textbf{residual (or comeagre)} if it includes the intersection of countably many dense open sets.
\begin{definition}

Let $(X,T)$ be a TDS and $x\in X$.  

\begin{enumerate}
\item The point $x$ is an \textbf{equicontinuity point} if for every $\e>0$ there exists $\d >0$ such that $$\diam (T^{i}B_{\d}(x))<\e$$ for every $i\in \NN$.  The set of equicontinuity points of $(X,T)$ is denoted by $EQ$.

\item  $(X,T)$ is \textbf{equicontinuous} if 
$EQ=X$.

\item  $(X,T)$ is \textbf{almost equicontinuous} if $EQ$
is residual.
 
\end{enumerate}
\end{definition}

Diam-mean equicontinuity was introduced in \cite{weakforms} and studied in \cite{garciajagerye}. 

\begin{definition}
\label{def:diam-mean}
	Let $(X,T)$ be a TDS. 
\begin{itemize}

\item We say $x$ is a \textbf{diam-mean equicontinuity point} if for every $\e>0$ there exists $\d >0$ such that 
$$\limsup_{n\to \infty}\frac{\sum_{i=0}^{n} \diam (T^{i}B_{\d}(x))}{n+1}<\e .$$ 
We denote the set of diam-mean equicontinuity points by $EQ$

\item $(X,T)$ is \textbf{diam-mean equicontinuous} if $EQ=X$.

\item $(X,T)$ \textbf{almost diam-mean equicontinuous} if $EQ$ is residual.

\end{itemize}
\end{definition}

It is trivial to see that every equicontinuity point is a diam-mean equicontinuity point.

\begin{proposition}
	\label{pro sigma}

Let $(X,T)$ be a TDS and $\e >0$. We define

$$EQ_{\e}=\{x\in X : \exists \ \d >0, \ \limsup_{n\rightarrow \infty}\frac{\sum_{i=0}^{n}\diam (T^{i}B_{\d}(x))}{n+1}<\e \} .$$
Then, $EQ_{\e}$ is open and $EQ=\bigcap_{m>0} EQ_{\frac{1}{m}}$. Furthermore, $EQ$ is dense if and only if it is a residual set.

\end{proposition}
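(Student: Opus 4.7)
The plan is to handle the three assertions in sequence; only the last one carries any real content, via a Baire category argument.

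For the openness of $EQ_\e$, I would use a ``shrinking ball'' trick. If $x \in EQ_\e$ with witness $\d > 0$, then every $y \in B_{\d/2}(x)$ satisfies $B_{\d/2}(y) \subset B_\d(x)$, so $\diam(T^i B_{\d/2}(y)) \leq \diam(T^i B_\d(x))$ for every $i \in \NN$. Averaging over $i=0,\ldots,n$ and passing to the $\limsup$ preserves this inequality, so the value $\d/2$ witnesses $y \in EQ_\e$. Hence $EQ_\e$ contains the open ball $B_{\d/2}(x)$ around each of its points.

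For the identity $EQ = \bigcap_{m > 0} EQ_{1/m}$, the inclusion $\subset$ follows at once by applying the definition of diam-mean equicontinuity point with $\e = 1/m$. Conversely, given $x$ in the intersection and any $\e > 0$, I would choose $m$ with $1/m < \e$; the $\d$ witnessing $x \in EQ_{1/m}$ then witnesses the $\e$-condition as well, placing $x$ in $EQ$.

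The equivalence ``dense iff residual'' is where Baire enters. Since $X$ is a compact metric space, it is a Baire space, so any set containing a countable intersection of dense open sets is itself dense; this gives ``residual implies dense'' for free. For the converse, I would note that the identity $EQ = \bigcap_m EQ_{1/m}$ already exhibits $EQ$ as a countable intersection of open sets; if $EQ$ is dense, then each of the open supersets $EQ_{1/m}$ is also dense, and therefore $EQ$ itself is the intersection of countably many dense open sets, which matches the paper's definition of residual. The only point that requires any care is the direction of the implication: I must verify that $EQ \subset EQ_{1/m}$ (immediate from the previous step) in order to transfer density from $EQ$ to each $EQ_{1/m}$. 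No additional estimates on $T$ or on $\diam$ are needed.
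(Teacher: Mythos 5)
Your proposal is correct and follows essentially the same route as the paper: the paper delegates the openness of $EQ_{\e}$ to cited lemmas (whose content is exactly your shrinking-ball argument), notes the identity $EQ=\bigcap_{m>0}EQ_{1/m}$, and then applies Baire's theorem in the same way you do. No gaps; you have simply written out the details the paper outsources to references.
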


\begin{proof}
The fact that $EQ_{\e}$ is open follows from \cite[Lemma 46]{weakforms} and \cite[Lemma 4.4]{garciajagerye}. 

It is easy to check that $EQ=\bigcap_{m>0} EQ_{\frac{1}{m}}$. Thus, by Baire's theorem, $EQ$ is residual if and only if it is dense.


\end{proof}

Throughout this paper, given $n\in \NN$, we use $[0,n]=\{0,1,...,n\}$.
Now we will give the setup of basic symbolic dynamics. 


\begin{enumerate}
\item Given a finite set $A$ (called an alphabet), we define the \textbf{$A$-full shift} as $A^{\ZZ }$. If $X$ is the $A$-full shift for some finite $A$ we say that $X$ is a \textbf{full shift}. 

\item Given $x\in A^{\ZZ}$, we represent the $i$-th coordinate of $x$ as $x_{i}$. Also, given $i,j\in \ZZ$ with $i<j$, we define the finite word $x_{[i,j]}=x_{i}\dots x_{j}$. 

\item We denote the set of all finite words as $A^{+}$.

\item We endow any full shift with the metric 
\begin{displaymath}
\begin{array}{rcl}
d(x,y) & = & \left\{ \begin{array}{ccl} 2^{-i} & \text{if} \ x\neq y & \text{where} \ i=\min \{ |j| : x_{j}\neq y_{j}  \} ; \\  
0 & \text{otherwise.} &
\end{array} \right.
\end{array}
\end{displaymath}

This metric generates the prodiscrete topology $A^{\ZZ}$.
\item For any full shift $A^{\ZZ}$, we define the shift map $\sigma:A^{\ZZ}\rightarrow A^{\ZZ}$ by $\sigma (x)_{i}=x_{i+1}$. The shift map is continuous (with respect to the previously defined metric).


\end{enumerate}

\begin{definition}\label{defCAShift}
We say that $(X,T)$ is a \textbf{cellular automaton (CA)} if $X$ is a full shift and $T:X\rightarrow X$ is continuous and commutes with $\sigma$, i.e., $\sigma \circ T=T\circ \sigma$. 
\end{definition}

\begin{remark}
	Note that $Tx_i$ represents the $i$th coordinate of the point $Tx$, and $Tx_{[0,n]}$ the word extracted from the $[0,n]$-coordinates of the point $Tx$. 
\end{remark}

The following fact can be extracted from the proof of \cite[Theorem 4]{kurka1997languages}.
\begin{proposition}
	\label{prop:lep}
	Let $(X,T)$ be a CA. If $x\in X$ is an equicontinuity point then $x$ is locally eventually periodic, i.e., for every $i\in \ZZ$ we have that $T^nx_i$ is an eventually periodic sequence (of $n$). 
\end{proposition}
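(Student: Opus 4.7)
The plan is to reformulate equicontinuity at $x$ as a combinatorial stabilization statement and then replace $x$ by a spatially periodic point whose forward orbit is forced, by pigeonhole, to be eventually periodic.

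First I would fix $i \in \ZZ$ and pick $\varepsilon = 2^{-|i|-1}$, so that $d(a,b) < \varepsilon$ forces $a_i = b_i$. Equicontinuity of $x$ would then supply $\delta = 2^{-\ell}$, with $\ell \geq |i|$, such that $\diam(T^n B_\delta(x)) < \varepsilon$ for every $n \in \NN$. Since $B_\delta(x)$ is precisely the cylinder of points agreeing with $x$ on $[-\ell, \ell]$, this translates into the statement that
$$T^n y_i \;=\; T^n x_i \qquad \text{for every } n \in \NN \text{ and every } y \in X \text{ with } y_{[-\ell,\ell]} = x_{[-\ell,\ell]}.$$

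Next I would let $y \in X$ be the $\sigma$-periodic point of period $p = 2\ell + 1$ obtained by repeating the block $x_{[-\ell,\ell]}$ biinfinitely; by construction $y \in B_\delta(x)$. Because $T$ commutes with $\sigma$, the identity $\sigma^p T^n y = T^n \sigma^p y = T^n y$ confines the whole forward orbit $(T^n y)_{n \geq 0}$ to the finite set of $\sigma$-periodic points whose period divides $p$ (of which there are at most $|A|^{p}$). Pigeonhole then yields $N \geq 0$ and $P \geq 1$ such that $T^{n+P} y = T^n y$ for all $n \geq N$; reading coordinate $i$ and combining with the previous display gives $T^{n+P} x_i = T^n x_i$ for all $n \geq N$, which is exactly eventual periodicity of the sequence $(T^n x_i)_{n\geq 0}$.

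The only genuinely conceptual step is the passage from $x$ to the spatially periodic $y$, and this is the standard reduction used in CA theory to convert equicontinuity into eventual periodicity; I do not anticipate a real obstacle, as the remaining work is bookkeeping in the prodiscrete metric.
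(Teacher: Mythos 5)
Your proof is correct. The paper gives no argument of its own for this proposition (it only notes that the fact can be extracted from Kurka's Theorem 4), and your reduction -- replacing $x$ by the spatially periodic point built from $x_{[-\ell,\ell]}$, confining its orbit to the finite set of $\sigma^{p}$-fixed points, and transferring eventual periodicity back to the column $T^{n}x_{i}$ via $\diam(T^{n}B_{\delta}(x))<\e$ -- is exactly the standard argument that the citation is pointing to.
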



For CA, there is a simple combinatorial characterization of a diam-mean equicontinuity using upper density. 
\begin{definition}\label{Density}
	
	Let $S\subseteq \NN$. We define the \textbf{upper density} of $S$  by
	$$\overline{D}(S)=\limsup_{n\rightarrow \infty}\frac{\sharp (S\cap [0,n-1] ) }{n}.$$
	
\end{definition}

 Let $n\in \NN$. We will denote the balls of radius $2^{-n}$ with $B_n(x)$. That is, $$B_n(x)=\{y\in A^{\ZZ}:x_i=y_i \forall i\in [-n,n]\}.$$

 Now we will define sensitivity sets on a set of columns. 
\begin{definition}\label{S_J}

Let $J\subset \ZZ$ be finite set and $n\in \NN$. We define 
$$S_{J}(x,n)=\{ i\in \NN : \exists \ y,z\in B_n(x), T^{i}y_{J}\neq T^{i}z_{J} \} .$$

\end{definition}

\begin{proposition}\label{D-M-E-P}
Let $(X,T)$ be a CA and $x\in X$. Then $x$ is a diam-mean equicontinuity point if  and only if for every $m\geq 0$ there exists  $m'\geq 0$ such that  
$$\overline{D}(S_{ \{-j,j \} }(x, m'))\leq \frac{1}{2^{m+2}}$$ for all $0\leq j \leq m+1$.
\end{proposition}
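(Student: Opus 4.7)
The plan is to translate the analytic diameter $\diam(T^i B_n(x))$ into a combinatorial description in terms of the sets $S_{\{-j,j\}}(x,n)$, and then relate time-averages of diameters to upper densities. From the prodiscrete metric one obtains the bridge identity
\[
\diam(T^i B_n(x)) = 2^{-k^*(i)}, \qquad \text{where } k^*(i) := \min\{j \geq 0 : i \in S_{\{-j,j\}}(x,n)\}
\]
(with the convention $2^{-\infty}=0$); equivalently, $i \in S_{\{-j,j\}}(x,n)$ if and only if $\diam(T^i B_n(x)) \geq 2^{-j}$. This identity is the engine of both implications.

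For the $(\Leftarrow)$ direction, given $\varepsilon > 0$ I would choose $m$ so that $(m+3)\cdot 2^{-(m+2)} < \varepsilon$, take the $m'$ supplied by the hypothesis, and set $\delta = 2^{-m'}$. Splitting the Ces\`aro sum $\frac{1}{N+1}\sum_{i=0}^{N} 2^{-k^*(i)}$ according to whether $k^*(i) \leq m+1$ or not, the tail is at most $2^{-(m+2)}$ term-wise; the head is at most $\#\bigl(\bigcup_{j=0}^{m+1} S_{\{-j,j\}}(x,m') \cap [0,N]\bigr)/(N+1)$, whose $\limsup$ is bounded by $(m+2)\cdot 2^{-(m+2)}$ via subadditivity of $\limsup$ together with the hypothesis. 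Summing gives a bound strictly below $\varepsilon$, so $x \in EQ_{\varepsilon}$; since this holds for every $\varepsilon$, $x \in EQ$.

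For the $(\Rightarrow)$ direction, given $m \geq 0$ I would apply the definition of diam-mean equicontinuity with $\varepsilon := 2^{-(2m+3)}$ to obtain $\delta > 0$, then pick $m'$ with $2^{-m'} \leq \delta$ so that $B_{m'}(x) \subseteq B_{\delta}(x)$. For every $0 \leq j \leq m+1$ and every $i \in S_{\{-j,j\}}(x,m')$, the bridge yields $\diam(T^i B_{m'}(x)) \geq 2^{-(m+1)}$, so the Ces\`aro average dominates $2^{-(m+1)} \cdot \overline{D}(S_{\{-j,j\}}(x,m'))$; comparing with the bound $2^{-(2m+3)}$ gives $\overline{D}(S_{\{-j,j\}}(x,m')) \leq 2^{-(m+2)}$ for all such $j$ using the same $m'$.

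The main obstacle is purely arithmetic: one has to tune the threshold $m+1$ and the choice $\varepsilon = 2^{-(2m+3)}$ so that both directions close cleanly. A mild subtlety in the $(\Leftarrow)$ direction is that $m'$ must work uniformly over $j \in [0, m+1]$, but this is automatic since $m'$ depends only on $\varepsilon$, not on $j$.
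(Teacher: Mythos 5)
Your proof is correct and follows essentially the same route as the paper: the same bridge between $\diam(T^{i}B_{m'}(x))$ and membership in the sets $S_{\{-j,j\}}(x,m')$, the same arithmetic $\e=2^{-(2m+3)}$ for the forward direction (which the paper phrases as a contrapositive), and for the converse a head/tail split of the Ces\`aro sum that is just a coarser version of the paper's layer-cake decomposition $\sharp S^{n}_0+\sum_{i}2^{-i}\sharp(S^{n}_i\setminus S^{n}_{i-1})$. The only blemish is that your ``equivalently'' clause should say $\diam(T^{i}B_n(x))\geq 2^{-j}$ iff $k^{*}(i)\leq j$ (i.e.\ $i\in S_{[-j,j]}(x,n)$, not $i\in S_{\{-j,j\}}(x,n)$), but since you only ever invoke the true implication $i\in S_{\{-j,j\}}(x,n)\Rightarrow\diam(T^{i}B_n(x))\geq 2^{-j}$ and the correct description of the head as $\bigcup_{j\le m+1}S_{\{-j,j\}}(x,m')$, nothing breaks.
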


\begin{proof}
\begin{itemize}
\item[$\Rightarrow$:] Suppose there exists $m\geq 0$ such that for all $m'\geq 0$ there exists $l\in[0,m+1]$ such that 
$$
\overline{D}(S_{\{-l,l\} }(x,m'))> \frac{1}{2^{m+2}}.
$$
This implies that
$$
\limsup_{n\rightarrow \infty} \frac{1}{n+1}\sum_{i=0}^{n}\diam (T^{i}B_{m'}(x))
\geq 
\limsup_{n\rightarrow \infty }\frac{1}{n+1}\displaystyle\sum_{i\in S_{\{-l,l\} }(x,m')\cap[0,n]}\diam (T^{i}B_{m'}(x)).
$$

$$
\geq 
\frac{1}{2^l}\limsup_{n\rightarrow \infty }\frac{1}{n+1}\sharp (S_{\{-l,l\} }(x,m')\cap [0,n] )
$$
$$
\geq 
\frac{1}{2^{m+1}}
\overline{D}(S_{\{-l,l\} }(x,m'))> \frac{1}{2^{2m+3}}.
$$

Therefore, $x$ is not a diam-mean equicontinuity point.

\item[$\Leftarrow$:] 
For every pair of integers $n,k\in \NN$ and every $x\in X$ we define the set
$$
S_{k}^{n}(x,m)=S_{[-k,k]}(x,m)\cap [0,n].
$$
Note that for every $k$ we have that
$$
S_{k}^{n}(x,m)\subseteq S_{k+1}^{n}(x,m)\text{, }
$$
and
$$
S_{k+1}^{n}(x,m)\setminus S_{k}^{n}(x,m)=S_{k}^{n}(x,m)\cap [0,n].
$$

Now, let us assume that for every $m\geq 0$ there exists $m'\geq 0$ such that  $$\overline{D}(S_{ \{-j,j \} }(x,m'))\leq \frac{1}{2^{m}}$$ for every $0\leq j\leq m+1$. 
For sufficiently large $m$ we conclude that
\small
\begin{displaymath}
\begin{array}{rl}
 & \displaystyle\limsup_{n\rightarrow \infty} \frac{1}{n+1}\displaystyle\sum_{i=0}^{n}\diam (T^{i}B_{m'}(x))
 \\
 \leq
 &\displaystyle\limsup_{n\rightarrow \infty }\frac{1}{n+1}\left[ \sharp (S^{n}_0(x,m'))+\displaystyle\sum_{i=1}^{\infty}\frac{1}{2^{i}}\sharp (S^{n}_i(x,m')\setminus S^{n}_{i-1}(x,m')) \right] 
\\
=
&\overline{D}(S_{ \{0\} }(x,m'))+\displaystyle\sum_{i=1}^{\infty}\frac{1}{2^{i}}\overline{D}(S_{ \{-j,j \} }(x,m'))
\\
\leq & \displaystyle\frac{1}{2^m}+
\sum_{i=1}^{m+1}\frac{1}{2^{i}}\cdot\frac{1}{2^m}+
\sum_{i=m+2}^{\infty}\frac{1}{2^{i}}
\\
\leq & \displaystyle\frac{1}{2^{m-3}}.
\end{array}
\end{displaymath}

\end{itemize}
This implies $x$ is a diam-mean equicontinuity point.
\end{proof}

\section{Almost diam-mean equicontinuity}

As we mentioned in the previous section, every almost equicontinuous CA is almost diam-mean equicontinuous. In this section we will construct an almost diam-mean equicontinuous CA that is not almost equicontinuous. For other examples of CA related to these concepts see \cite{delossantos,torma2015uniquely}.







First, we define a CA that resembles an odometer. 
Let $A_{1}=\{  \vacio \}\cup Z_3=\{  \vacio , \cero, \uno, \dos \}$. We define the CA $T_{1}:A_{1}^{\ZZ}\rightarrow A_{1}^{\ZZ}$ locally as follows: $T_{1}x_i=\vacio$ if and only if $x_i=\vacio$, otherwise $T_{1}x_i\in \{x_i,(x_i+1)$mod$3\}$, with $T_{1}x_i=(x_i+1)$mod$3$ if and only if $x_{i+1}\in \{\vacio,\dos\}$. In other words

\begin{displaymath}
\begin{array}{ccc}
T_{1}x_{i} & = & \displaystyle\left\{ \begin{array}{ccl}
\vacio & if & x_{i} = \vacio \\


\cero & if & (x_{i} = \dos  \wedge x_{i+1}\in \{ \vacio , \dos \} ) \vee \\
 & & (x_{i}=\cero \wedge x_{i+1}\in A_{1}\setminus \{ \vacio , \dos  \} ) ;\\

\uno & if & (x_{i}= \cero \wedge x_{i+1}\in \{ \vacio , \dos \} ) \vee \\
 & &(x_{i}=\uno \wedge x_{i+1}\in A_{1}\setminus \{ \vacio  , \dos \} ) ;\\

\dos & if & x_{i}=\uno \wedge x_{i+1}\in \{ \vacio , \dos \} ) \vee \\
 & & (x_{i}=\dos \wedge x_{i+1}\in A_{1}\setminus \{ \vacio ,\dos  \} ) .\\



\end{array}\right.

\end{array}
\end{displaymath}

\begin{example}\label{4puertas}
Let $x\in A_{1}^{\ZZ}$ such that $x_{[0,5]}=\vacio \cero \cero \cero \cero \vacio $. We have that

	\begin{displaymath}
	\begin{array}{cccccccccccccccccc}
T_{1}^{0}x_{[0,5]} & = &  \vacio & \cero   &\cero   &\cero    &\cero        &\vacio  & & & T_{1}^{14}x_{[0,5]} & = &\vacio &	\uno   &\cero   &\uno    &\dos        &\vacio\\
T_{1}^{1}x_{[0,5]} & = &\vacio &	\cero   &\cero   &\cero    &\uno         &\vacio  & & & T_{1}^{15}x_{[0,5]} & = &\vacio &	\uno   &\cero   &\dos   &\cero         &\vacio\\
T_{1}^{2}x_{[0,5]} & = &\vacio &	\cero   &\cero   &\cero    &\dos        &\vacio  & & & T_{1}^{16}x_{[0,5]} & = &\vacio &	\uno   &\uno   &\dos   &\uno         &\vacio\\
T_{1}^{3}x_{[0,5]} & = &\vacio &	\cero   &\cero   &\uno    &\cero         &\vacio  & & & T_{1}^{17}x_{[0,5]} & = &\vacio &	\uno   &\dos  &\dos   &\dos        &\vacio \\
T_{1}^{4}x_{[0,5]} & = &\vacio &	\cero   &\cero   &\uno    &\uno         &\vacio  & & & T_{1}^{18}x_{[0,5]} & = &\vacio & \dos  &\cero   &\cero    &\cero         &\vacio\\
T_{1}^{5}x_{[0,5]} & = &\vacio &	\cero   &\cero   &\uno    &\dos        &\vacio  & & & T_{1}^{19}x_{[0,5]} & = &\vacio &	\dos  &\cero   &\cero    &\uno         &\vacio\\
T_{1}^{6}x_{[0,5]} & = &\vacio &	\cero   &\cero   &\dos   &\cero         &\vacio  & & & T_{1}^{20}x_{[0,5]} & = &\vacio &	\dos  &\cero   &\cero    &\dos        &\vacio\\
T_{1}^{7}x_{[0,5]} & = &\vacio &	\cero   &\uno   &\dos   &\uno         &\vacio  & & & T_{1}^{21}x_{[0,5]} & = &\vacio &	\dos  &\cero   &\uno    &\cero         &\vacio\\
T_{1}^{8}x_{[0,5]} & = &\vacio &	\cero   &\dos  &\dos   &\dos        &\vacio  & & & T_{1}^{22}x_{[0,5]} & = &\vacio &	\dos  &\cero   &\uno    &\uno         &\vacio \\
T_{1}^{9}x_{[0,5]} & = &\vacio & \uno   &\cero   &\cero    &\cero         &\vacio  & & & T_{1}^{23}x_{[0,5]} & = &\vacio &	\dos  &\cero   &\uno    &\dos        &\vacio \\
T_{1}^{10}x_{[0,5]} & = &\vacio &	\uno   &\cero   &\cero    &\uno     &\vacio  & & & T_{1}^{24}x_{[0,5]} & = &\vacio &	\dos  &\cero   &\dos   &\cero         &\vacio \\
T_{1}^{11}x_{[0,5]} & = &\vacio &	\uno   &\cero   &\cero    &\dos    &\vacio  & & & T_{1}^{25}x_{[0,5]} & = &\vacio &	\dos  &\uno   &\dos   &\uno         &\vacio\\
T_{1}^{12}x_{[0,5]} & = &\vacio &	\uno   &\cero   &\uno    &\cero     &\vacio  & & & T_{1}^{26}x_{[0,5]} & = &\vacio &	\dos  &\dos  &\dos   &\dos        &\vacio \\
T_{1}^{13}x_{[0,5]} & = &\vacio &	\uno   &\cero   &\uno    &\uno     &\vacio  & & & T_{1}^{27}x_{[0,5]} & = &  \vacio & \cero   &\cero   &\cero    &\cero & \vacio. \\
 
	\end{array}
	\end{displaymath}

\end{example}

\begin{remark}  \label{rem0}
Let $x,y\in A_{1}^{\ZZ}$. Note that $T_{1}x_i$ only depends on $x_i$ and $x_{i+1}$. Hence,  if there exist $M,N\in \ZZ$ such that $x_{M+i}=y_{N+i}$ for every $i\geq 0$, then $T_{1}x_{M+i}=T_{1}y_{N+i}$ for every $i\geq 0$.
\end{remark}

\begin{remark}
	\label{rem}
It is not difficult to see that $T_1$ is almost equicontinuous. In fact, $\vacio$ is a \textbf{blocking word}; that is: if $x,y\in A_1^{\ZZ}$ with $x_{[m',m]}=y_{[m',m]}$ and $x_m=\vacio$, then 
$T_{1}^nx_{[m',m]}=T_{1}^ny_{[m',m]}$ for every $n>0$. 	Moreover, in this situation, one can check that there exist $M>0$ and $p>0$ such that $T_{1}^{M+ip}x_{[m',m]}=T_{1}^{M}x_{[m',m]}$ for all $i\geq 0$.
\end{remark}

Given a CA $(X,T)$, we say a point $x\in X$ is \textbf{periodic} with period $p$ if $T^px=x$. This should not be confused with the statement. Let $J\subset \ZZ$. We say $x_J$ is \textbf{periodic}, if the sequence $\{T^n_J\}_{n\in\NN}$ is periodic.  If $x$ is periodic then $x_J$ is periodic for every $J\subset \ZZ$. The converse may not hold (the periodic can increase).  

We will now present some statements that will help us to understand the behavior of $T_1$. 

\begin{lemma}\label{uno l=0}

Let $x\in A_{1}^{\ZZ}$ such that $x_{[0,1]}= \textnormal{\cero \vacio}$. Then $x_{[0,1]}$ is periodic with period $3$.

\end{lemma}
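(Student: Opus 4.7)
The plan is to verify the claim by direct computation of $T_1^n x_{[0,1]}$ for $n=0,1,2,3$, using only the local rules for $T_1$ together with Remark \ref{rem0}. The key observation is that the pair $x_{[0,1]}$ evolves in a closed way: because $x_1 = \vacio$, the rule forces $T_1 x_1 = \vacio$ regardless of $x_2$, so by induction $T_1^n x_1 = \vacio$ for all $n \geq 0$. Hence $T_1^n x_0$ is determined solely by $T_1^{n-1} x_0$ together with the fact that its right neighbour is always $\vacio$.

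Next I would apply the increment rule: whenever the right neighbour lies in $\{\vacio,\dos\}$, the local rule sends $a \mapsto (a+1) \bmod 3$ for $a \in \{\cero,\uno,\dos\}$. Starting from $x_0 = \cero$, this gives the cycle
\begin{displaymath}
T_1^0 x_{[0,1]} = \cero\,\vacio \ \longrightarrow \ T_1^1 x_{[0,1]} = \uno\,\vacio \ \longrightarrow \ T_1^2 x_{[0,1]} = \dos\,\vacio \ \longrightarrow \ T_1^3 x_{[0,1]} = \cero\,\vacio,
\end{displaymath}
which shows that $\{T_1^n x_{[0,1]}\}_{n \in \NN}$ is periodic with period $3$. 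The period is exactly $3$ (not a proper divisor) because the three words $\cero\,\vacio,\ \uno\,\vacio,\ \dos\,\vacio$ are distinct.

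There is no real obstacle here: the argument is a three-line computation. The only point to be careful about is the appeal to Remark \ref{rem0}, which guarantees that the two-cell evolution does not depend on $x_i$ for $i \geq 2$, so the statement is well-posed for arbitrary $x \in A_1^{\ZZ}$ satisfying $x_{[0,1]} = \cero\,\vacio$ rather than just for some particular extension.
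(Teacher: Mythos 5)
Your proposal is correct and follows essentially the same route as the paper's proof: both first note that $x_1=\vacio$ forces $T_1^n x_1=\vacio$ for all $n$, and then apply the increment rule to coordinate $0$ to obtain the cycle $\cero\to\uno\to\dos\to\cero$. Your additional remarks on minimality of the period and on Remark \ref{rem0} are harmless refinements of the same computation.
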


\begin{proof}
Observe that $T_{1}x_{i}=\vacio$ if and only if $x_{i}=\vacio$. Hence, 

\begin{itemize}
\item $T_{1}x_{0}=\uno$ and $T_{1}x_{1}=\vacio$;
\item $T_{1}^{2}x_{0}=\dos$ and $T_{1}^{2}x_{1}=\vacio$; and
\item $T_{1}^{3}x_{0}=\cero$ and $T_{1}^{3}x_{1}=\vacio$.
\end{itemize}
Therefore, $x_0$ has period $3$.
\end{proof}
\begin{remark} 
Using that $T_1$ commutes with the shift we obtain that for every $z\in\ZZ$ if $x_{[z,z+1]}= \textnormal{\cero \vacio}$, then $x_{[z,z+1]}$ is periodic with period $3$.
\end{remark}
\begin{lemma}\label{uno l=1}

Let $x\in A_{1}^{\ZZ}$ such that $x_{[0,2]}= \textnormal{\cero \cero \vacio}$. Then $x_{[0,2]}$ has period $9$.

\end{lemma}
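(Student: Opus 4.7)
The plan is to exploit the fact that the right boundary cell at position $2$ is $\vacio$ and remains $\vacio$ forever, which decouples the dynamics on positions $0$ and $1$ from everything to the right. First I would observe that by the definition of $T_1$ we have $T_1^n x_2 = \vacio$ for every $n\ge 0$, so the cell at position $1$ always has a right neighbor in $\{\vacio,\dos\}$, and a similar argument to Lemma \ref{uno l=0} (applied via the shift, as in the remark following that lemma) shows that $x_1$ is periodic with period $3$, cycling through $\cero,\uno,\dos,\cero,\uno,\dos,\dots$

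Next I would analyze cell $0$. Its right neighbor takes value in $\{\vacio,\dos\}$ precisely when $x_1 = \dos$, which by the previous step happens exactly at times $n\equiv 2\pmod 3$. Hence at each such time step the value at position $0$ is incremented by $1$ mod $3$; at all other times it stays fixed. Starting from $\cero$ at time $0$, after each block of $3$ consecutive iterations the value of position $0$ increases by exactly $1$ mod $3$. Therefore $T_1^{3k}x_0 = k \bmod 3$, and in particular $T_1^9 x_0 = \cero$, $T_1^{3}x_0 = \uno$, $T_1^{6}x_0 = \dos$, so $9$ is the smallest positive iterate after which $x_0$ returns to $\cero$.

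Finally I would combine: since $x_2$ is constant, $x_1$ has period $3$, and $x_0$ has period $9$ (a multiple of $3$), the word $x_{[0,2]}$ is periodic with period $\mathrm{lcm}(1,3,9)=9$, and this is minimal because $T_1^{3}x_0 = \uno \neq \cero = x_0$ and $T_1^{6}x_0 = \dos \neq \cero = x_0$. No step is particularly hard; the only thing to be careful about is the shift argument that makes Lemma \ref{uno l=0} applicable to position $1$ (via Remark \ref{rem0}), since there $\vacio$ sits at position $1$ rather than at position $2$, but this is immediate from the fact that $T_1$ commutes with $\sigma$ and that $T_1 x_i$ depends only on $x_i$ and $x_{i+1}$.
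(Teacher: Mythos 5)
Your proof is correct and follows essentially the same route as the paper: apply Lemma \ref{uno l=0} (shifted) to get that position $1$ cycles with period $3$ while position $2$ stays \vacio, and then track the increments of position $0$, which occur exactly when position $1$ reads \dos, yielding period $9$. The only difference is cosmetic — you phrase the update of cell $0$ as an increment at times $n\equiv 2\pmod 3$ and add an explicit minimality check, whereas the paper simply lists the values at times $3$, $6$, $9$.
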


\begin{proof}
 Remark \ref{rem} and Lemma \ref{uno l=0} implies that $T_{1}^{3k}x_{1}=1$ and $T_{1}^{k}x_{2}=\vacio$ for all $k\in \ZZ_{\geq 0}$. Thus, we have that
\begin{itemize}
\item $T_{1}^{3}x_{[0,1]}=\uno \ \cero$ and $T_{1}^{3}x_{2}=\vacio$;
\item $T_{1}^{6}x_{[0,1]}=\dos \ \cero$ and $T_{1}^{6}x_{2}=\vacio$;
\item $T_{1}^{9}x_{[0,1]}=\cero \ \cero$ and $T_{1}^{9}x_{2}=\vacio$.
\end{itemize}
Therefore, $x_{[0,2]}$ have period $9$.
\end{proof}

From the proof of Lemma \ref{uno l=1}, we can conclude the next result.

\begin{lemma}\label{uno,dos,tres l=1}

If $x\in A_{1}^{\ZZ}$ such that $x_{[0,2]}= \textnormal{\cero \cero \vacio}$, then
\begin{enumerate}
\item $T_{1}^{i}x_{0}=\textnormal{\cero}$ for all $0\leq i\leq 2$;
\item $T_{1}^{i}x_{0}=\textnormal{\uno}$ for all $3\leq i\leq 5$;
\item $T_{1}^{i}x_{0}=\textnormal{\dos}$ for all $6\leq i\leq 8$.
\end{enumerate}
\end{lemma}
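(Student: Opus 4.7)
The plan is to leverage the orbit information already extracted in the proof of Lemma \ref{uno l=1}. Since $x_{[1,2]} = \cero\vacio$, Lemma \ref{uno l=0} applied after a shift (justified by Remark \ref{rem0}) yields the full cycle
\[
T_1^{3k} x_1 = \cero, \qquad T_1^{3k+1} x_1 = \uno, \qquad T_1^{3k+2} x_1 = \dos,
\]
together with $T_1^k x_2 = \vacio$ for every $k \geq 0$. So before looking at coordinate $0$ at all, I already know the complete trajectory of coordinate $1$.

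With this in hand, I would read off the behavior of $x_0$ directly from the local rule defining $T_1$. Because $T_1 z_0$ depends only on $z_0$ and $z_1$, the update at step $k$ keeps $T_1^{k+1} x_0 = T_1^k x_0$ whenever $T_1^k x_1 \in A_1 \setminus \{\vacio, \dos\}$, and replaces $T_1^k x_0$ by $(T_1^k x_0 + 1)\bmod 3$ whenever $T_1^k x_1 \in \{\vacio, \dos\}$. From the cycle above, the increment condition is triggered precisely at those $k$ with $k \equiv 2 \pmod 3$.

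Starting from $T_1^0 x_0 = \cero$, I would then conclude by tracking these increments: $T_1^k x_0 = \cero$ for $k \in \{0,1,2\}$, the first increment at $k=2$ gives $T_1^3 x_0 = \uno$ and hence $T_1^k x_0 = \uno$ for $k \in \{3,4,5\}$, and the second increment at $k=5$ gives $T_1^6 x_0 = \dos$ and hence $T_1^k x_0 = \dos$ for $k \in \{6,7,8\}$. These three conclusions are exactly items (1)--(3). There is no real obstacle here beyond careful bookkeeping; once one observes that increments of the $0$-coordinate occur along a single residue class modulo $3$, the three items fall out immediately from the orbit of $x_1$ already computed for Lemma \ref{uno l=1}.
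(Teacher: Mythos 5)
Your argument is correct and matches the paper's intent: the paper gives no separate proof, stating only that the lemma follows from the proof of Lemma \ref{uno l=1}, which is exactly the computation you carry out (the period-$3$ orbit of coordinate $1$ forces increments of coordinate $0$ precisely at steps $k\equiv 2 \pmod 3$). Your write-up simply makes explicit the bookkeeping the paper leaves implicit.
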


Something to be careful about is that the period does not necessarily increase if the amount of $\cero$s increases (one of the differences with an odometer). The following lemma is an evidence of this comment.

\begin{lemma}\label{3 unos}
Let $x,y\in A_{1}^{\ZZ}$ such that $x_{[0,2]}= \textnormal{\cero \cero \vacio}$ and $y_{[0,3]}= \textnormal{\cero \cero \cero \vacio}$. Then $x_{[0,2]}$ and $y_{[0,3]}$ have period $9$.
\end{lemma}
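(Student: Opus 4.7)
The first assertion---that $x_{[0,2]}$ has period $9$---is exactly Lemma~\ref{uno l=1}, so the plan is to concentrate on $y_{[0,3]}$. My approach is to decouple column $0$ of $y$ from columns $1,2,3$: first transfer the period $9$ already established for the $\cero\cero\vacio$ pattern to $y_{[1,3]}$ via shift-invariance, and then iterate the local rule on column $0$ for nine steps, reading off column $1$ from the data already known.

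Concretely, since $T_{1}$ commutes with $\sigma$ and $\sigma(y)_{[0,2]}=y_{[1,3]}=\cero\cero\vacio$, Lemma~\ref{uno l=1} gives $T_{1}^{9}y_{[1,3]}=y_{[1,3]}$, and Lemma~\ref{uno,dos,tres l=1} furnishes the explicit one-period trajectory of column $1$:
\[
T_{1}^{i}y_{1}=\cero \ (0\leq i\leq 2),\qquad T_{1}^{i}y_{1}=\uno \ (3\leq i\leq 5),\qquad T_{1}^{i}y_{1}=\dos \ (6\leq i\leq 8).
\]

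Next, I would iterate $T_{1}$ on column $0$, using that $T_{1}z_{0}$ depends only on $(z_{0},z_{1})$. The nine-step computation splits into two regimes:
\begin{itemize}
\item For $0\leq i\leq 5$, $T_{1}^{i}y_{1}\in\{\cero,\uno\}=A_{1}\setminus\{\vacio,\dos\}$, so whenever $T_{1}^{i}y_{0}=\cero$ the rule forces $T_{1}^{i+1}y_{0}=\cero$; induction from $y_{0}=\cero$ then yields $T_{1}^{i}y_{0}=\cero$ for every $0\leq i\leq 6$.
\item For $6\leq i\leq 8$, $T_{1}^{i}y_{1}=\dos\in\{\vacio,\dos\}$, so the rule advances column $0$ cyclically in $\ZZ_{3}$, producing $T_{1}^{7}y_{0}=\uno$, $T_{1}^{8}y_{0}=\dos$, $T_{1}^{9}y_{0}=\cero$.
\end{itemize}
Combining $T_{1}^{9}y_{0}=y_{0}$ with $T_{1}^{9}y_{[1,3]}=y_{[1,3]}$ gives $T_{1}^{9}y_{[0,3]}=y_{[0,3]}$, as required.

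The main obstacle is the bookkeeping around the transition at $i=6$, when column $1$ switches from $\uno$ to $\dos$: one must be certain column $0$ is still at $\cero$ at exactly that moment, so that the three subsequent cyclic advances close the orbit back to $\cero$ rather than drifting. This is precisely what the regime-$1$ calculation guarantees, and it is the reason adding a third $\cero$ on the left does not inflate the period beyond $9$.
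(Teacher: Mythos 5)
Your proposal is correct and follows essentially the same route as the paper: both rest on Lemmas~\ref{uno l=1} and~\ref{uno,dos,tres l=1} applied to the block $y_{[1,3]}=\textnormal{\cero\cero\vacio}$, deduce that column $0$ idles at $\textnormal{\cero}$ through step $6$, and then close the orbit over steps $7$--$9$. The only cosmetic difference is that you compute the last three steps directly from the local rule, whereas the paper phrases them as the identification $T_{1}^{6+i}y_{0}=T_{1}^{i}y_{2}$ with column $2$'s period-$3$ orbit.
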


\begin{proof}
By Lemmas \ref{uno l=1} and \ref{uno,dos,tres l=1} we have that 
$$T_{1}^{i}y_{0}=\cero,$$
for all $0\leq i\leq 6$. Observe that
$$T_{1}^{6+i}y_{0}=T_{1}^{i}y_{2},$$
for all $0\leq i\leq 3$. Hence, $T_{1}^{9}y_{[0,3]}=\cero \ \cero \ \cero \ \vacio$.
\end{proof}

To generalize Lemma \ref{3 unos} first we need the following statement.



\begin{lemma}
	\label{lem:basico}
	Let $m,k>0$, $x,y\in A_1^{\ZZ}$ such that $x_{[0,k]}=y_{[0,k]}$, and $\{T_1^ix_{k+1},T_{1}^iy_{k+1}\}\subset \{\vacio,\textnormal{\dos}\}$ for every $i\in[0,m]$. Then $T_{1}^ix_{[0,k]}=T_{1}^iy_{[0,k]}$ for every $i\in[0,m]$.  
\end{lemma}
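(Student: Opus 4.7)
The plan is to argue by straightforward induction on $i\in[0,m]$, exploiting a key structural feature of the local rule: inspecting the case analysis defining $T_1$, the value of $T_1x_i$ depends on $x_i$ together with \emph{only} the information of whether $x_{i+1}\in\{\vacio,\dos\}$ or $x_{i+1}\in A_1\setminus\{\vacio,\dos\}$. In other words, the local rule factors through the map $A_1\to\{*,\star\}$ that collapses $\{\vacio,\dos\}$ to $*$ and $\{\cero,\uno\}$ to $\star$ in the right neighbor slot. This is the single observation that makes the lemma almost immediate.

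First I would state and verify this observation precisely by reading off the four cases in the definition of $T_1$: in each of the clauses determining $T_1x_i\in\{\vacio,\cero,\uno,\dos\}$, the condition on $x_{i+1}$ is phrased as the dichotomy $x_{i+1}\in\{\vacio,\dos\}$ versus $x_{i+1}\in A_1\setminus\{\vacio,\dos\}$. Consequently, if $x_i=y_i$ and $x_{i+1},y_{i+1}$ lie in the same half of this dichotomy, then $T_1x_i=T_1y_i$.

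Then I would perform induction on $i\in[0,m]$. The base case $i=0$ is the hypothesis $x_{[0,k]}=y_{[0,k]}$. For the inductive step, suppose $T_1^ix_{[0,k]}=T_1^iy_{[0,k]}$ for some $i<m$. For each $j\in[0,k-1]$, the value $T_1^{i+1}x_j$ is determined by $T_1^ix_j$ and $T_1^ix_{j+1}$, which coincide respectively with $T_1^iy_j$ and $T_1^iy_{j+1}$; hence $T_1^{i+1}x_j=T_1^{i+1}y_j$. For $j=k$, the value $T_1^{i+1}x_k$ is determined by $T_1^ix_k=T_1^iy_k$ together with $T_1^ix_{k+1}$, and the value $T_1^{i+1}y_k$ is determined analogously from $T_1^iy_{k+1}$; by hypothesis both $T_1^ix_{k+1}$ and $T_1^iy_{k+1}$ belong to $\{\vacio,\dos\}$, so they fall in the same side of the dichotomy above, giving $T_1^{i+1}x_k=T_1^{i+1}y_k$. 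This closes the induction and yields the lemma.

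There is no real obstacle here; the only thing to be careful about is the bookkeeping of indices (the inductive step from $i$ to $i+1$ consumes the hypothesis at level $i\leq m-1$, so the conclusion is obtained for all $i+1\leq m$, matching the stated range), and the explicit verification that every clause in the definition of $T_1$ really depends on $x_{i+1}$ only through membership in $\{\vacio,\dos\}$.
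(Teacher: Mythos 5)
Your proof is correct and follows essentially the same route as the paper's: the paper's own (one-line) argument invokes exactly the locality of the rule (Remark \ref{rem0}), shift-commutation, and the observation that $T_1x'_0$ depends on $x'_1$ only through membership in $\{\vacio,\textnormal{\dos}\}$, which is the dichotomy you isolate and then propagate by induction on the time step. Your version simply spells out the induction and index bookkeeping that the paper leaves implicit.
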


\begin{proof}




The proof can be obtained using Remark \ref{rem0}, the shift commuting property of a CA, and the fact that if $x',y'$ satisfies that $x'_0=y'_0$ and $\{x'_{1},y'_{1}\}\subset \{\vacio,\dos\}$, then $T_1x'_0=T_1y'_0$.

\end{proof}

Now we will prove more important properties of $T_1$.
\begin{proposition}\label{lemaNpuertas}

Let $l\in \NN$, $j\in [0,2^{l}]$, and $x,y\in A_{1}^{\ZZ}$ with $x_{[0,2^{l}]}=\textnormal{\cero}^{2^{l}} \vacio$ and $y_{[0,2^{l}+j]}= \textnormal{\cero}^{2^{l}+j} \vacio$. We have that $x_{[0,2^{l}]}$ and $y_{[0,2^{l}+j]}$ have period $3^{l+1}$. Furthermore, 
      \begin{itemize}
      \item $T_{1}^{i}x_{0}=\textnormal{\cero}$ for all $0\leq i <3^{l}$;
      \item $T_{1}^{i}x_{0}=\textnormal{\uno}$ for all $3^{l}\leq i <2(3^{l})$;
      \item $T_{1}^{i}x_{0}=\textnormal{\dos}$ for all $2(3^{l})\leq i <3^{l+1}$.
      \end{itemize}

\end{proposition}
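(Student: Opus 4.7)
The plan is to prove the proposition by induction on $l$, with the base case $l = 1$ handled by Lemmas~\ref{uno l=1}, \ref{uno,dos,tres l=1}, and \ref{3 unos}. For the inductive step of the $x$-case, the key move will be to split the configuration $x_{[0, 2^{l+1}]} = \cero^{2^{l+1}} \vacio$ at the midpoint position $2^l$, apply the inductive hypothesis to the right half $x_{[2^l, 2^{l+1}]} = \cero^{2^l} \vacio$, and couple the two halves via Lemma~\ref{lem:basico}.

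By the IH applied to the right half, position $x_{2^l}$ cycles through $\cero, \uno, \dos$ in three consecutive intervals of length $3^l$ each, with full period $3^{l+1}$. During the first two intervals the left half $x_{[0, 2^l - 1]}$ remains the constant string $\cero^{2^l}$: each left-half cell's right neighbor is $\cero$ or $\uno$, which is not in $\{\vacio, \dos\}$, so no $T_1$-increment fires and the chain of $\cero$s is preserved all the way to position $0$. During the third interval, $x_{2^l}$ equals $\dos$ for $3^l$ consecutive steps, and Lemma~\ref{lem:basico} lets me treat position $2^l$ as $\vacio$; the left half together with this virtual $\vacio$ is then precisely $\cero^{2^l} \vacio$, so by the IH its position $0$ advances from $\cero$ to $\uno$ over those $3^l$ steps. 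Iterating this phase structure across three consecutive cycles of the right half (i.e., $3^{l+2}$ global steps) drives position $0$ through $\cero \to \uno \to \dos \to \cero$, yielding both the period $3^{l+2}$ and the furthermore clause at level $l+1$. In the second and third iterations the left half begins as $\uno \cero^{2^l - 1}$ and $\dos \cero^{2^l - 1}$, so I will strengthen the inductive statement to cover all starting configurations $a \cero^{2^l - 1} \vacio$ with $a \in \{\cero, \uno, \dos\}$; the cases $a = \uno, \dos$ reduce to the $a = \cero$ case since $T_1^n w_0 - w_0 \pmod 3$ depends only on the evolution of $T_1^i w_j$ for $j \ge 1$, which is unaffected by the value of $w_0$.

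For the $y$-case the plan is to exploit the $\vacio$-blocking property noted in Remark~\ref{rem}: for each position $i \in [0, 2^l + j]$, the value $T_1^n y_i$ depends only on $y_{[i, 2^l + j]} = \cero^{2^l + j - i} \vacio$. When $i \ge 1$ this sub-configuration has strictly fewer $\cero$s than $y$, so a secondary strong induction on the number of $\cero$s gives that its period divides $3^{l+1}$, and hence $T_1^{3^{l+1}} y_i = y_i$. The main obstacle will be $i = 0$: here I plan to rerun the same left/right phase analysis, but now with the right part $y_{[2^l, 2^l + j]} = \cero^j \vacio$ having only $j < 2^l$ $\cero$s, so that by IH its position-$2^l$ dynamics have period dividing $3^l$; a careful burst-counting argument on the left half should then show that the accumulated increments to $y_0$ over $3^{l+1}$ global steps are $\equiv 0 \pmod 3$, returning $y_0$ to its initial value $\cero$.
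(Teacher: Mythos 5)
Your treatment of the $x$-half is essentially the paper's own argument: the same split at the midpoint $2^{l}$, the same use of Lemma~\ref{lem:basico} to replace a persistent $\dos$ at position $2^{l}$ by a virtual $\vacio$, and the same iteration over three consecutive cycles of the right half. Your explicit strengthening to prefixes $a\,\cero^{2^{l}-1}\vacio$ with $a\in\{\cero,\uno,\dos\}$, justified by noting that the increment count of position $0$ depends only on coordinates $\geq 1$, is a clean way to make precise what the paper dispatches with ``in a similar way.'' That half is fine.

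The $y$-half has a genuine gap, and it sits exactly where the paper's one nontrivial choice lives. You split $\cero^{2^{l}+j}\vacio$ at position $2^{l}$, so the right block is $\cero^{j}\vacio$ and the left block is $\cero^{2^{l}}$, and you propose to count $\dos$-bursts of position $2^{l}$. This fails for two reasons. First, the induction hypothesis gives you only the \emph{period} of $\cero^{j}\vacio$, not its phase structure, and for general $j$ that structure is irregular: the leftmost cell of $\cero^{3}\vacio$, say, equals $\dos$ only at step $8$ of each period of $9$, so the $\dos$-times of position $2^{l}$ come in short scattered runs rather than one run of length $3^{l}$. Second, and fatally, Lemma~\ref{lem:basico} only couples the left block to the virtual configuration $\cero^{2^{l}}\vacio$ \emph{while} position $2^{l}$ stays equal to $\dos$; between bursts the left block does not freeze in its intermediate state, because its interior cells keep firing off one another (cell $i$ increments whenever cell $i+1$ is $\dos$, regardless of position $2^{l}$). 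So the left block is not a time-dilated copy of $\cero^{2^{l}}\vacio$, and no count of $\dos$-bursts at position $2^{l}$ determines the increments of $y_{0}$. Concretely, take $l=1$, $j=1$, $y=\cero^{3}\vacio$, split at position $2$: position $2$ equals $\dos$ at times $2,5,8$, i.e.\ three times in $[0,9)$, yet $T_{1}^{9}y_{[0,1]}=\cero\,\cero$ whereas the dilation heuristic predicts $T_{1}^{3}(\cero^{2}\vacio)_{[0,1]}=\uno\,\cero$. The paper splits the other way: $\cero^{2^{l}+j}\vacio$ is $\cero^{j}$ followed by $\cero^{2^{l}}\vacio$, i.e.\ the cut is at position $j$, so that the \emph{right} block is the power-of-two block whose leftmost cell is $\cero$, then $\uno$, then $\dos$ on three consecutive intervals of length $3^{l}$. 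The left block $\cero^{j}$ is then genuinely frozen for the first $2\cdot 3^{l}$ steps (its rightmost cell never sees a $\dos$), and during the final interval it runs as $\cero^{j}\vacio$ for exactly $3^{l}$ uninterrupted steps, returning to $\cero^{j}$ because its period divides $3^{l}$ by the induction on the number of $\cero$'s. Your handling of coordinates $i\geq 1$ of $y$ is correct; it is only $y_{0}$ that needs this swap.

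One last remark: the claim is actually false at the endpoint $j=2^{l}$ (then $y=\cero^{2^{l+1}}\vacio$, whose period is $3^{l+2}$, as the $x$-clause at level $l+1$ shows), and both the paper's proof and yours silently restrict to $j<2^{l}$; so you lose nothing there relative to what the paper actually proves.
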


\begin{proof} 
We will prove this result using induction on $l$. From Lemma \ref{uno l=1}, we have the result for $l=0$. Let us assume that the results hold for $l=k$; that is:
\begin{equation}\label{Proof:l=k}
\begin{array}{rcl}
  T_{1}^{i}x_{0}&=&\textnormal{\cero \ for all } 0\leq i <3^{k};\\
  T_{1}^{i}x_{0}&=&\textnormal{\uno \ for all } 3^{k}\leq i <2(3^{k});\\
  T_{1}^{i}x_{0}&=&\textnormal{\dos \ for all } 2(3^{k})\leq i <3^{k+1};\\
  x_{[0,2^{k}]} & \text{and} & y_{[0,2^{k}+j]} \text{ have period } 3^{k+1}.
\end{array}
\end{equation}
Now, let $l=k+1$. By \eqref{Proof:l=k} and Remark \ref{rem} we have that
      \begin{itemize}
      \item $T_{1}^{i}x_{2^{k}}=\cero$ for all $0\leq i <3^{k}$;
      \item $T_{1}^{i}x_{2^{k}}=\uno$ for all $3^{k}\leq i <2(3^{k})$;
      \item $T_{1}^{i}x_{2^{k}}=\dos$ for all $2(3^{k})\leq i <3^{k+1}$.
      \end{itemize}
Hence, we have that $T_{1}^{i}x_{[0,2^{k})}=\cero^{2^{k}}$ for all $0\leq i \leq 2(3^{k})$. Observe  that 
$$T_{1}^{2(3^{k})}x_{[0,2^{k}]}= \cero^{2^{k}} \dos \text{ and }  x_{[2^{k},2^{k+1}]}=\cero^{2^{k}} \vacio.$$
Using Lemma \ref{lem:basico} and the fact that $T_1$ commutes with the shift, we obtain that
$$T_{1}^{2(3^{k})+i}x_{[0,2^{k})}=T_{1}^{i}x_{[2^{k},2^{k+1})} ,$$
for all $0\leq i < 3^{k}$. This implies that  $T_{1}^{i}x_{0}=\cero$ for all $0\leq i <3^{k+1}$. 

By \eqref{Proof:l=k} we have that $y_{[0,2^{k+1}-1]}=\cero^{2^{k+1}-1} \vacio$ has period $3^{k+1}$. Since $x_{(0,2^{k+1}]}=y_{[0,2^{k+1}-1]}$, Remark \ref{rem} gives us that 
$$T_{1}^{3^{k+1}}x_{(0,2^{k+1}]}=x_{(0,2^{k+1}]}=\cero^{2^{k+1}-1}\vacio.$$
Thus,
 $$T_{1}^{3^{k+1}-1}x_{[0,2^{k+1}]}=\cero \ \dos^{2^{k+1}-1} \vacio \text{, and}$$  
$$T_{1}^{3^{k+1}}x_{[0,2^{k+1}]}=\uno \ \cero^{2^{k+1}-1} \vacio.$$ 
By this, and a similar use of \eqref{Proof:l=k}, we obtain that \begin{itemize}
      \item $T_{1}^{3^{k+1}+i}x_{2^{k}}=\cero$ for all $0\leq i <3^{k}$;
      \item $T_{1}^{3^{k+1}+i}x_{2^{k}}=\uno$ for all $3^{k}\leq i <2(3^{k})$;
      \item $T_{1}^{3^{k+1}+i}x_{2^{k}}=\dos$ for all $2(3^{k})\leq i <3^{k+1}$.
\end{itemize}
Then, $T_{1}^{3^{k+1}+i}x_{[0,2^{k})}=\uno \ \cero^{2^{k}-1}$ for all $0\leq i \leq 2(3^{k})$. So, using Lemma \ref{lem:basico} and the fact that $T_1$ commutes with the shift, we have that
$$T_{1}^{3^{k+1}+i}x_{[0,2^{k})}=T_{1}^{i}x_{[2^{k},2^{k+1})}$$
for all $2(3^{k})\leq i < 3^{k+1}$. Therefore, $T_{1}^{i}x_{0}=\uno$ for all $3^{k+1}\leq i <2(3^{k+1})$.
In a similar way we have that $T_{1}^{i}x_{0}=\dos$ for all $2(3^{k+1})\leq i <3^{k+2}$.
Hence,
\begin{itemize}
      \item $T_{1}^{i}x_{0}=\cero$ for all $0\leq i <3^{k+1}$;
      \item $T_{1}^{i}x_{0}=\uno$ for all $3^{k+1}\leq i <2(3^{k+1})$; and
      \item $T_{1}^{i}x_{0}=\dos$ for all $2(3^{k+1})\leq i <3^{k+2}$.
\end{itemize}
Therefore, $x_{[0,2^{k+1}]}$ has period $p=3^{k+2}$. Then, for all $l\in \NN$ we have that $x_{[0,2^{l}]}=\cero^{2^{l}}\vacio$ satisfies:
\begin{itemize}
      \item $T_{1}^{i}x_{0}=\textnormal{\cero}$ for all $0\leq i <3^{l}$;
      \item $T_{1}^{i}x_{0}=\textnormal{\uno}$ for all $3^{l}\leq i <2(3^{l})$;
      \item $T_{1}^{i}x_{0}=\textnormal{\dos}$ for all $2(3^{l})\leq i <3^{l+1}$.
      \end{itemize} 

Now, let $0\leq j < 2^{k+1}$. Using the induction hypothesis we have that $T_{1}^{i}y_{[0,j)}=\cero^{j}$ for all $0\leq i \leq 2(3^{k+1})$. Using Lemma \ref{lem:basico} and the fact that $T_1$ commutes with the shift, we obtain that 
$$T_{1}^{2(3^{k+1})+i}y_{[0,j)}= T_{1}^{i}x_{[2^{k+1}-j,2^{k+1})}$$
for all $0\leq i\leq 3^{k+1}$. Since $x$ has period $3^{k+1}$ we have that
$$T_{1}^{3^{k+1}}x_{[2^{k+1}-j,2^{k+1}]}=\cero^{j}\vacio.$$
Therefore, $y_{[0,2^{k+1}+j]}$ has period $2(3^{k+1})+3^{k+1}=3^{k+2}$.
\end{proof}




     


\begin{proposition} \label{Siempre hay una salida}
	If $x\in A_1^{\ZZ}$ is such that $x_{j}=\vacio$ for some $j\in \ZZ$, then for all $i\in \NN$ we have $T_{1}^n x_{j-i}\in \{\vacio,\textnormal{\dos}\}$ for infinitely many $n>0$.
\end{proposition}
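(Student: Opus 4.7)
The plan is to argue by induction on $i\geq 0$, exploiting the fact that the local rule of $T_{1}$ keeps the cell $j$ fixed at $\vacio$ forever, and that a non-$\vacio$ cell can only advance (mod~$3$) when its right neighbour sits in $\{\vacio,\dos\}$.

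\textbf{Base case $i=0$.} This is immediate from the definition of $T_{1}$: the clause ``$T_{1}x_{i}=\vacio$ if $x_{i}=\vacio$'' together with the fact that $\vacio$ is the only preimage of $\vacio$ shows that $T_{1}^{n}x_{j}=\vacio$ for every $n\geq 0$, so in particular $T_1^n x_j\in\{\vacio,\dos\}$ infinitely often.

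\textbf{Inductive step.} Assume the conclusion holds at position $j-(i-1)=j-i+1$, i.e.\ $T_{1}^{n}x_{j-i+1}\in\{\vacio,\dos\}$ for infinitely many $n$. If $x_{j-i}=\vacio$, the same argument as in the base case gives $T_{1}^{n}x_{j-i}=\vacio$ for all $n$ and we are done. Otherwise $x_{j-i}\in\{\cero,\uno,\dos\}$, and the local rule guarantees that $T_1^n x_{j-i}$ never becomes $\vacio$. Set $a_{n}:=T_{1}^{n}x_{j-i}$ and $b_{n}:=T_{1}^{n}x_{j-i+1}$. A direct reading of the rule yields $a_{n+1}=(a_{n}+1)\bmod 3$ exactly when $b_{n}\in\{\vacio,\dos\}$, and $a_{n+1}=a_{n}$ otherwise. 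By the inductive hypothesis the first alternative occurs infinitely often, so the sequence $(a_{n})$ is incremented infinitely many times in $\ZZ/3\ZZ$ and therefore visits the value $\dos$ infinitely often. This closes the induction.

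\textbf{Main obstacle.} The only subtlety is to check that the recursion for $a_{n}$ really depends only on whether $b_{n}\in\{\vacio,\dos\}$ (and not on the specific value $\vacio$ vs.\ $\dos$), which is visible from the piecewise definition of $T_{1}$ but must be verified carefully for the case $x_{j-i}=\dos$, where both branches of the definition involve the set $\{\vacio,\dos\}$. Once this case analysis is pinned down, the argument is essentially an inductive propagation from the ``wall'' at position $j$ leftward, with the set $\{\vacio,\dos\}$ playing the role of the firing condition.
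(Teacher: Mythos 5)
Your proof is correct and follows essentially the same route as the paper: induction on $i$, with the key observation that a non-$\vacio$ cell increments modulo $3$ precisely when its right neighbour lies in $\{\vacio,\textnormal{\dos}\}$, so the inductive hypothesis forces infinitely many increments and hence infinitely many visits to $\dos$. Your phrasing of the firing condition directly in terms of membership in $\{\vacio,\textnormal{\dos}\}$ even lets you skip the small case split the paper makes on whether the neighbouring cell is $\vacio$.
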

\begin{proof}
We will prove this result by induction on $i$. The result follows for $i=0$ using straightforward applications of the rules of the automaton (as in Lemma \ref{uno l=0}). 
Assume the result holds for $i>0$. We may assume that $x_{j-i}\neq \vacio$ and $x_{j-i-1}\neq \vacio$; otherwise, the result is straightforward. Hence, we have that $\{ n>0 : T_{1}^{n} x_{j-i}= \dos \}$ is infinite. Furthermore, by the rules of the automaton for every $n' \in \{ n>0 : T_{1}^{n} x_{j-i}= \dos \}$ we have that $T^{n'+1}x_{j-i-1}=(T^{n'}x_{j-i-1}+1)$mod$3$. With this we can conclude the result. 

\end{proof}
 
Now we will combine $T_1$ with the shift map.  
Let $A_{2}=\{ \vacio , \flechai \}$, and $\sigma:A_{2}^{\ZZ}\rightarrow A_{2}^{\ZZ}$ the shift map.

Let $A=A_1\times A_2$. At times we will identify $A$ with the following set $$A=\{ \vacio ,\cero ,\uno ,\dos,\flechai ,\cerof ,\unof,\dosf \}.$$
Note that with this notation, we identify the point $\vacio\times\vacio\in A$ with $\vacio$. In general it will be clear which $\vacio$ we are referring to. 
 Let $\gamma_{1}: A\rightarrow A_{1}$ and $\gamma_{2}: A\rightarrow A_{2}$ be the projection functions. We also extend such functions to $A^{\ZZ}\rightarrow A^{\ZZ}_{1}$ and $A^{\ZZ}\rightarrow A^{\ZZ}_{2}$, respectively.



We define the CA $T:A^{\ZZ}\rightarrow A^{\ZZ}$ locally (and coordinate-wise) as the only CA that satisfying:
$$
(\gamma_1 (Tx))_i=(T_1\gamma_1 (x))_i \text{, and}
$$ 
$$
(\gamma _{2}(Tx))_{i}=\left\{ 
\begin{array}{cc}
	(\sigma \gamma _{2}(x))_{i} & \text{if }\left\{ \flechai,
	\dosf \right\} \cap \left\{ x_{i},x_{i+1}\right\} \neq \emptyset 
	\text{,} \\ 
	(\gamma _{2}(x))_{i} & \text{otherwise.}
\end{array}
\right. 
$$

In other words, on the first coordinate $T$ acts exactly as $T_1$; on the second coordinate, an arrow advances to the left if and only if the first coordinate is a $\vacio$ or a $\dos$. In case two arrows overlap they superimpose each other (i.e. the CA is not conservative).

In the introduction, we said this construction was a ``local" skew product. On a skew product the phase space is the product, one coordinate acts normally and the second acts only if the first coordinate is in a certain position. The main difference is that here the shift only acts locally. 

Though the next table is not needed for the proofs we provide it in case it is of assistance to the reader.

\begin{displaymath}
\begin{array}{ccc}
Tx_{i}& = & \displaystyle\left\{ \begin{array}{ccl}
\vacio & if & x_{i}\in \{ \vacio , \flechai \}  \wedge x_{i+1} \in A\setminus \{ \flechai , \dosf \} ;\\

\flechai & if & x_{i}\in \{ \vacio , \flechai \} \wedge x_{i+1}\in \{ \flechai, \dosf \} ;\\

\cero & if & (x_{i}\in \{ \dos , \dosf \}  \wedge x_{i+1}\in \{ \vacio , \dos \} ) \vee ;\\
     &    & (x_{i}=\cero \wedge x_{i+1}\in A\setminus \{ \vacio  , \flechai,\dos , \dosf \} ) ;\\

\uno & if & (x_{i}= \cero \wedge x_{i+1}\in \{ \vacio , \dos \} ) \vee ;\\
     &    & (x_{i}=\uno \wedge x_{i+1}\in A\setminus \{ \vacio  , \flechai,\dos , \dosf \} ) ;\\

\dos & if & (x_{i}=\uno \wedge x_{i+1}\in \{ \vacio , \dos \} ) \vee \\
      &    & (x_{i}\in \{ \dos , \dosf \} \wedge x_{i+1}\in A\setminus \{ \vacio  , \flechai,\dos , \dosf \} );\\

\cerof & if & (x_{i}\in \{ \dos ,\dosf \}  \wedge x_{i+1}\in \{ \flechai,\dosf \} ) \vee ; \\
      &    & (x_{i}=\cerof\wedge x_{i+1}\in A\setminus \{ \vacio ,\flechai,\dos ,\dosf\} ) ;\\

\unof & if & (x_{i}=\cero \wedge x_{i+1}\in  \{ \flechai, \dosf \} ) \vee  \\
      &    & (x_{i}=\cerof \wedge x_{i+1}\in \{ \vacio ,\flechai,\dos ,\dosf \} ) \vee \\ 
      &    & (x_{i}=\unof \wedge x_{i+1}\in A\setminus \{ \vacio  , \flechai,\dos , \dosf \});\\

\dosf & if & (x_{i}= \uno \wedge x_{i+1}\in  \{ \flechai, \dosf \} ) \vee  \\ 
       &    & (x_{i}=\unof \wedge x_{i+1} \in \{ \vacio  , \flechai,\dos , \dosf \} ). \\

\end{array}\right.

\end{array}
\end{displaymath}



When we write $x:=\infinito\vacio. \cero \ \flechai^{\infty}$, we mean the point $x$ such that $x_0=\cero$, $x_{-i}=\vacio$ and $x_i=\flechai$ for every $i\neq 0$.
\pagebreak
\begin{example}\label{4puertas}
Let $x\in A^{\ZZ}$ such that $x=\infinito \vacio$.\vacio \cero \cero \cero \cero $\flechai^{\infty}$. We have that

	\begin{displaymath}
	\begin{array}{cccccccccccccccccccc}
T^{0}x_{[0,6]} & = & \vacio &\cero &\cero &\cero &\cerof &\flechai & &T^{14}x_{[0,6]} & = &\vacio &\unof &\cero &\unof  &\dosf & \flechai &\\
T^{1}x_{[0,6]} & = &\vacio &	\cero   &\cero   &\cero    &\unof  &\flechai & & T^{15}x_{[0,6]} & = &\vacio &	\unof   &\cero   &\dosf   &\cerof   &\flechai\\
T^{2}x_{[0,6]} & = &\vacio &	\cero   &\cero   &\cero    &\dosf &\flechai  & & T^{16}x_{[0,6]} & = &\vacio &	\unof   &\unof   &\dos   &\unof   &\flechai\\
T^{3}x_{[0,6]} & = &\vacio &	\cero   &\cero   &\unof &\cerof &\flechai & & T^{17}x_{[0,6]} & = &\vacio & \unof &\dosf  &\dos  &\dosf &\flechai \\
T^{4}x_{[0,6]} & = &\vacio &	\cero   &\cero   &\unof    &\unof  &\flechai  & & T^{18}x_{[0,6]} & = &\vacio & \dosf  &\cero   &\cerof    &\cerof    &\flechai\\
T^{5}x_{[0,6]} & = &\vacio &	\cero   &\cero   &\unof    &\dosf  &\flechai & & T^{19}x_{[0,6]} & = &\flechai &	\dos  &\cero   &\cerof    &\unof    &\flechai\\
T^{6}x_{[0,6]} & = &\vacio &	\cero   &\cero   &\dosf   &\cerof  &\flechai & & T^{20}x_{[0,6]} & = &\vacio &	\dos  &\cero   &\cerof    &\dosf  &\flechai\\
T^{7}x_{[0,6]} & = &\vacio &	\cero   &\unof   &\dos   &\unof  &\flechai  & & T^{21}x_{[0,6]} & = &\vacio &	\dos  &\cero   &\unof    &\cerof    &\flechai\\
T^{8}x_{[0,6]} & = &\vacio & \cero  &\dosf &\dos   &\dosf  & \flechai & & T^{22}x_{[0,6]} & = &\vacio &	\dos  &\cero   &\unof    &\unof     &\flechai\\
T^{9}x_{[0,6]} & = &\vacio & \unof   &\cero   &\cerof    &\cerof   &\flechai  & & T^{23}x_{[0,6]} & = &\vacio &	\dos  &\cero   &\unof    &\dosf   &\flechai \\
T^{10}x_{[0,6]} & = &\vacio &	\unof   &\cero   &\cerof    &\unof &\flechai  & & T^{24}x_{[0,6]} & = &\vacio &	\dos  &\cero   &\dosf   &\cerof     &\flechai \\
T^{11}x_{[0,6]} & = &\vacio &	\unof   &\cero   &\cerof    &\dosf &\flechai & & T^{25}x_{[0,6]} & = &\vacio &	\dos  &\unof   &\dos   &\unof  &\flechai \\
T^{12}x_{[0,6]} & = &\vacio &	\unof   &\cero   &\unof    &\cerof & \flechai  & & T^{26}x_{[0,6]} & = &\vacio &	\dos  &\dosf  &\dos   &\dosf  &\flechai \\
T^{13}x_{[0,6]} & = &\vacio &	\unof   &\cero   &\unof    &\unof  &\flechai & & T^{27}x_{[0,6]} & = &  \vacio & \cerof   &\cero   &\cerof    &\cerof & \flechai .\\
 
	\end{array}
	\end{displaymath}

\end{example}

\begin{example}\label{4puertas}
Let $x\in A^{\ZZ}$ such that $x=\infinito \vacio$.\vacio \cero \vacio \cero \cero $\flechai^{\infty}$. We have that

	\begin{displaymath}
	\begin{array}{cccccccccccccccccccc}
T^{0}x_{[0,6]} & = & \vacio &\cero &\vacio &\cero &\cero &\flechai & &T^{14}x_{[0,6]} & = &\vacio &\dos &\vacio &\unof  &\dosf & \flechai &\\
T^{1}x_{[0,6]} & = &\vacio &	\uno   &\vacio   &\cero    &\unof  &\flechai & & T^{15}x_{[0,6]} & = &\vacio &	\cero   &\vacio   &\dosf   &\cerof   &\flechai\\
T^{2}x_{[0,6]} & = &\vacio &	\dos   &\vacio  &\cero    &\dosf &\flechai  & & T^{16}x_{[0,6]} & = &\vacio &	\uno   &\flechai   &\dos   &\unof   &\flechai\\
T^{3}x_{[0,6]} & = &\vacio &	\cero   &\vacio   &\unof &\cerof &\flechai & & T^{17}x_{[0,6]} & = &\vacio & \dosf &\vacio  &\dos  &\dosf &\flechai \\
T^{4}x_{[0,6]} & = &\vacio &	\uno   &\vacio   &\unof    &\unof  &\flechai  & & T^{18}x_{[0,6]} & = &\flechai & \cero  &\vacio   &\cerof    &\cerof    &\flechai\\
T^{5}x_{[0,6]} & = &\vacio &	\dos   &\vacio   &\unof    &\dosf  &\flechai & & T^{19}x_{[0,6]} & = &\vacio &	\uno  &\vacio   &\cerof    &\unof    &\flechai\\
T^{6}x_{[0,6]} & = &\vacio &	\cero   &\vacio   &\dosf   &\cerof  &\flechai & & T^{20}x_{[0,6]} & = &\vacio &	\dos  &\vacio   &\cerof    &\dosf  &\flechai\\
T^{7}x_{[0,6]} & = &\vacio &	\uno   &\flechai   &\dos   &\unof  &\flechai  & & T^{21}x_{[0,6]} & = &\vacio &	\cero  &\vacio   &\unof    &\cerof    &\flechai\\
T^{8}x_{[0,6]} & = &\vacio & \dosf  &\vacio &\dos   &\dosf  & \flechai & & T^{22}x_{[0,6]} & = &\vacio &\uno  &\vacio   &\unof    &\unof     &\flechai\\
T^{9}x_{[0,6]} & = &\flechai & \cero   &\vacio   &\cerof    &\cerof   &\flechai  & & T^{23}x_{[0,6]} & = &\vacio &	\dos  &\vacio   &\unof    &\dosf   &\flechai \\
T^{10}x_{[0,6]} & = &\vacio &	\uno   &\vacio   &\cerof    &\unof &\flechai  & & T^{24}x_{[0,6]} & = &\vacio &	\cero  &\vacio   &\dosf   &\cerof     &\flechai \\
T^{11}x_{[0,6]} & = &\vacio &	\dos   &\vacio   &\cerof    &\dosf &\flechai & & T^{25}x_{[0,6]} & = &\vacio &	\uno  &\flechai   &\dos   &\unof  &\flechai \\
T^{12}x_{[0,6]} & = &\vacio &	\cero   &\vacio   &\unof    &\cerof & \flechai  & & T^{26}x_{[0,6]} & = &\vacio &	\dosf  &\vacio  &\dos   &\dosf  &\flechai \\
T^{13}x_{[0,6]} & = &\vacio &	\uno   &\vacio   &\unof    &\unof  &\flechai & & T^{27}x_{[0,6]} & = &  \flechai & \cero   &\vacio   &\cerof    &\cerof & \flechai \\
 
	\end{array}
	\end{displaymath}

\end{example}






\begin{lemma}\label{pared k=1} Let $x\in A^{\ZZ}$ such that $x_{[0,2]}=\vacio \ \textnormal{\cero} \ \vacio $. Then $T^{n}x_{0}=\vacio$ for all $n\neq 3k$, where $k \geq 1$.
\end{lemma}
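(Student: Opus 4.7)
The plan is to show that an arrow at position $0$ can only appear at times that are positive multiples of $3$, by isolating the dynamics at positions $0$ and $1$. First, since $x_0 = \vacio$ and $T_1$ preserves $\vacio$ (locally: $T_1 y_i = \vacio$ iff $y_i = \vacio$), the first coordinate of $T^n x_0$ remains $\vacio$ for every $n$. Hence $T^n x_0 \in \{\vacio, \flechai\}$, and the task reduces to showing $T^n x_0 \neq \flechai$ whenever $n$ is not a positive multiple of $3$.

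Second, because $x_1 x_2 = \cero \vacio$, the $\vacio$ at position $2$ is a blocking word (Remark \ref{rem}), so the first-coordinate evolution at position $1$ is driven entirely by $\cero \vacio$; by Lemma \ref{uno l=0} applied via the shift, the first coordinate of $T^n x_1$ is $\cero$, $\uno$, or $\dos$ according to whether $n \equiv 0, 1,$ or $2 \pmod{3}$. In particular, $\gamma_1(T^n x)_1 \neq \vacio$ for all $n$, so $T^n x_1 \neq \flechai$.

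For the key step, reading off the local rule of $T$ at position $0$ (and using $\gamma_1(T^{m-1} x)_0 = \vacio$), one has $T^m x_0 = \flechai$ if and only if $T^{m-1} x_1 \in \{\flechai, \dosf\}$. Combining with $T^{m-1} x_1 \neq \flechai$, we deduce $T^{m-1} x_1 = \dosf$, which forces $\gamma_1(T^{m-1} x)_1 = \dos$ and hence $m - 1 \equiv 2 \pmod{3}$, i.e., $m \equiv 0 \pmod{3}$. Since $T^0 x_0 = \vacio$, such $m$ must satisfy $m \geq 3$, giving $m = 3k$ with $k \geq 1$; the contrapositive yields the lemma.

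The main obstacle is verifying carefully that $T^m x_0 = \flechai$ forces $T^{m-1} x_1 \in \{\flechai, \dosf\}$. This amounts to a direct case analysis from the definition of $T$, where one must check in particular that weaker arrow cells ($\cerof$ or $\unof$) at position $1$ do not suffice to maintain an arrow at position $0$ at the next step.
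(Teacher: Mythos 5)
Your proof is correct and follows essentially the same route as the paper's: the paper likewise invokes Lemma \ref{uno l=0} (shifted to position $1$) to get that $\gamma_{1}(T^{n}x_{1})=\dos$ exactly when $n=3k-1$, and concludes that an arrow can occupy position $0$ only at times $3k$; you merely make explicit the intermediate steps (the first coordinate at position $0$ stays $\vacio$, position $1$ is never $\flechai$, and an arrow at position $0$ at time $m$ must have arrived from a $\dosf$ at position $1$ at time $m-1$). The one point you flag as delicate --- that $\cerof$ or $\unof$ at position $1$ cannot sustain an arrow at position $0$ --- is settled by the rule, as given in the paper's displayed table and verbal description, that an arrow advances into the cell to its left only when its own first coordinate is $\vacio$ or $\dos$, so an arrow sitting at position $0$ always departs at the next step and is replenished only from a $\flechai$ or $\dosf$ at position $1$.
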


\begin{proof}
From Lemma \ref{uno l=0} we have that $\gamma_{1}(T^{n} x_{1})=\dos$ if and only if $n=3k-1$, where $k\geq 1$. Therefore, $T^{n}x_{0}=\vacio$ for all $n\neq 3k$, where $k \geq 1$. 
\end{proof}

\begin{lemma}\label{pared k=l}
Let $k,l\geq 0$, and $x\in A^{\ZZ}$ such that $x_{[0,2^{l}+1]} = \vacio \ \textnormal{\cero}^{2^{l}} \vacio$. If $n\neq k(3^{l+1})+2(3^{l})+1$ then $T^{n}x_{0}=\vacio$.
\end{lemma}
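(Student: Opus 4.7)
My plan is to follow the proof strategy of Lemma \ref{pared k=1}, replacing Lemma \ref{uno l=0} with Proposition \ref{lemaNpuertas}. Since $x_{[1, 2^l + 1]} = \cero^{2^l}\,\vacio$, Proposition \ref{lemaNpuertas} (applied at position $1$, using that $T_1$ commutes with the shift) yields that $\gamma_1(T^n x_1)$ is periodic of period $3^{l+1}$, taking the value $\cero$ on $[0, 3^l)$, $\uno$ on $[3^l, 2\cdot 3^l)$, and $\dos$ on $[2\cdot 3^l, 3^{l+1})$ modulo $3^{l+1}$. The identification of the $\dos$-phase at position $1$ is the key input from the first-coordinate dynamics; everything downstream is about the second coordinate.

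Next I would use the definition of $T$ on the second coordinate to locate the possible times at which $T^n x_0$ could cease to equal $\vacio$. Since $\gamma_1(T^m x_0) = \vacio$ for every $m$, the only way the second coordinate at position $0$ changes at time $n-1 \to n$ is for the shift condition $\{T^{n-1}x_0, T^{n-1}x_1\} \cap \{\flechai,\dosf\} \neq \emptyset$ to be triggered while bringing in an arrow from position $1$. Because $\gamma_1(T^{n-1}x_1) \in \{\cero,\uno,\dos\}$ (position $1$ is never $\vacio$ in the first coordinate, by Remark \ref{rem}), any such triggering contribution from position $1$ requires $T^{n-1}x_1 = \dosf$; in particular this forces $\gamma_1(T^{n-1}x_1) = \dos$, so $n-1$ must lie in $[k \cdot 3^{l+1} + 2\cdot 3^l,\,(k+1)\cdot 3^{l+1})$ for some $k \geq 0$.

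The main obstacle is narrowing this window of size $3^l$ down to the single time $n = k(3^{l+1}) + 2\cdot 3^l + 1$ (the very beginning of each $\dos$-phase at position $1$). To do this I would track the second coordinate at position $1$ through the $\dos$-phase: at its first moment the arrow (if present at $1$) transfers to position $0$, and the value propagated into $\gamma_2(T^{\cdot}x)_1$ from position $2$ thereafter is $\vacio$, because Proposition \ref{lemaNpuertas} applied recursively inside the block $[2, 2^l]$ prevents a fresh $\dosf$ from appearing at position $2$ during the $\dos$-phase of position $1$. Once this is established, periodicity (period $3^{l+1}$) of both the first-coordinate pattern and of the arrow-entry mechanism lets me iterate across $k$, completing the identification of the exceptional set as $\{k(3^{l+1}) + 2\cdot 3^l + 1 : k \geq 0\}$ and hence proving $T^n x_0 = \vacio$ outside it.
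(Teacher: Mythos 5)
Your proposal follows the paper's proof of Lemma \ref{pared k=l} essentially step for step: Proposition \ref{lemaNpuertas} applied at position $1$ gives the three-phase structure of period $3^{l+1}$; the observation that position $1$ is never $\vacio$ in the first coordinate reduces the problem to locating the times at which $T^{n}x_{1}=\dosf$; the fact that $\gamma_{1}(T^{i}x_{2})\neq \dos$ for $2\cdot 3^{l}\leq i<3^{l+1}-1$ (position $2$ cannot enter its own $\dos$-phase while position $1$ is being held at $\dos$, except at the very last instant) pins the exceptional time down to the first step of each $\dos$-phase of position $1$; and periodicity closes the argument. The paper packages the last two points into two short ``easily checked'' local properties of $T$, but the content is the same.

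One remark on the step you yourself identify as the main obstacle. Knowing that no $\dosf$ appears at position $2$ during $[2\cdot 3^{l},3^{l+1}-1)$ rules out position $2$ \emph{pushing} an arrow into position $1$; but when position $1$ is itself $\dosf$, the displayed update rule has position $1$ pull in $\gamma_{2}(T^{i}x_{2})$ wholesale, and this equals $\flechai$ whenever position $2$ merely carries an arrow (as $\cerof$ or $\unof$), not only when it is $\dosf$. Since the lemma places no hypothesis on $x$ to the right of position $2^{l}+1$, arrows can flow in from there and be parked at position $2$ during position $1$'s $\dos$-phase, so the scenario is not vacuous. Hence the assertion that ``the value propagated into $\gamma_{2}(T^{\cdot}x)_{1}$ thereafter is $\vacio$'' does not follow from ``no fresh $\dosf$ at position $2$'' alone: you must either show that position $2$ carries no arrow at the one instant position $1$ is $\dosf$, or read the transport rule as moving only arrows that sit on a $\vacio$ or a $\dos$. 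The paper's second ``easily checked'' property elides exactly the same point, so this is not a defect of your route relative to the paper's; it is, however, the one place where a fully detailed write-up of either argument has to say more.
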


\begin{proof}
We will use the following two properties that are easily checked for any $y\in A^{\ZZ}$.  
\begin{itemize}
\item  If $\gamma_1( y_1)\notin \{\vacio,\dos\}$ and $\gamma_{2}(y_{0})=\vacio$ then $\gamma_2 (Ty_0)=\vacio$.

\item If  $\gamma_1( y_2)\notin \{\vacio,\dos\}$ and $\gamma_1( y_1)=\dos $ then $\gamma_2 (T^2y_0)=\vacio$.
\end{itemize}
Now by Proposition \ref{lemaNpuertas} we have that $(\gamma_1(x))_1$ has period $3^{l+1}$ (for $T_1$) and

\begin{itemize}
      \item $\gamma_{1}(T^{i}x_{1})=\cero$ for all $0\leq i <3^{l}$;
      \item $\gamma_{1}(T^{i}x_{1})=\uno$ for all $3^{l}\leq i <2(3^{l})$;
      \item $\gamma_{1}(T^{i}x_{1})=\dos$ for all $2(3^{l})\leq i <3^{l+1}$.
\end{itemize}
This implies that $\gamma_{1}(T_{1}^{i}x_{2})\neq \dos$ for all $2(3^{l})\leq i <3^{l+1}-1$ (otherwise value on coordinate $1$ would change).

Using the previous properties we conclude that $T^{i}x_{0}=\vacio$ for all $2(3^{l})+2\leq i \leq 3^{l+1}$. Actually, using the periodicity of $\gamma_{1}(x_{[0,2^{l}+1]})$ (Proposition \ref{lemaNpuertas}) we conclude that $T^{n}x_{0}=\vacio$ for all $n\neq k(3^{l+1})+2(3^{l})+1$, for some $k\geq 0$.
\end{proof}

\begin{lemma}\label{palabrasinfantasmas}
	Let $x\in A^{\ZZ}$ and $j\in \ZZ$ such that $\gamma_{2}(x_{i})= \vacio $ for all  $i\geq j$ and $\gamma_{1}(x_{j})=\vacio$. 
	For every $k\geq j$ there exists $N>0$ such that $\gamma_{2}(T^n x_k)=\vacio$ for every $n\geq N$.
\end{lemma}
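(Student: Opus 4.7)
The approach is a short induction exploiting the one-sided nature of the arrow dynamics. The crucial observation is that the rule defining the second coordinate of $T$ gives
\[
(\gamma_2(Tx))_i \;\in\; \{(\gamma_2(x))_i,\; (\gamma_2(x))_{i+1}\},
\]
regardless of what the first coordinate is doing. In particular, an arrow can never be created ex nihilo at position $i$: the updated value is either the old value at $i$ or the old value at $i+1$, so arrows can only migrate leftward and never spawn. The assumption $\gamma_2(x_i)=\vacio$ for all $i\geq j$ is therefore preserved by one application of $T$, and the lemma should follow by a one-line induction.

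More concretely, the plan is to establish by induction on $n\geq 0$ the stronger claim that $\gamma_2(T^n x)_i=\vacio$ for every $i\geq j$. The base case $n=0$ is exactly the hypothesis on $x$. For the inductive step, fix $i\geq j$ and assume the claim at time $n$. Then $\gamma_2(T^n x)_i=\gamma_2(T^n x)_{i+1}=\vacio$, which means neither $T^n x_i$ nor $T^n x_{i+1}$ lies in $\{\flechai,\dosf\}$, since both of those symbols are among those with second-coordinate equal to $\flechai$. Hence the activation condition $\{\flechai,\dosf\}\cap\{T^n x_i,T^n x_{i+1}\}\neq\emptyset$ fails, and by definition of $T$ we get $\gamma_2(T^{n+1}x)_i=\gamma_2(T^n x)_i=\vacio$, closing the induction.

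From the claim, the statement of the lemma is immediate: for any $k\geq j$ one may take $N=1$ (indeed any $N\geq 0$), and $\gamma_2(T^n x_k)=\vacio$ for all $n\geq N$. I do not expect any serious obstacle; the argument is essentially tautological once the leftward-migration property of the arrow layer is isolated. The hypothesis $\gamma_1(x_j)=\vacio$ is not actually used in this specific argument—it is present because the lemma is meant to be applied in a context where the first-coordinate dynamics are simultaneously blocked at position $j$, ensuring the half-line remains a stable ``clean'' region in subsequent arguments.
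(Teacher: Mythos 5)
Your induction is valid and it does establish the statement exactly as printed: since $(\gamma_2(Tx))_i$ always equals either $(\gamma_2(x))_i$ or $(\gamma_2(x))_{i+1}$, the arrow-free half-line $\{i\geq j\}$ is forward invariant, so $\gamma_2(T^nx_k)=\vacio$ for every $k\geq j$ and every $n$. But the two things you flagged as harmless --- that the hypothesis $\gamma_1(x_j)=\vacio$ is never used, and that the conclusion holds with $N$ arbitrary --- are symptoms that you have proved a vacuous statement rather than the intended one. The lemma as printed has a sign typo: the conclusion is meant to read ``for every $k\leq j$'' (equivalently, for every $k\in\ZZ$). That is the version the paper's own proof addresses and the only version that supports the two invocations of the lemma: in Proposition \ref{ningunpunto-m-e-p} it is used to clear the arrows out of the window $[-m,m]$ when the arrow-free hypothesis holds only for $|i|>m$, and in Proposition \ref{punto-d-m-e-propo} it is used to clear the arrows out of $w$, which occupies coordinates strictly to the \emph{left} of the arrow-free tail.

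For the intended statement your argument supplies only half of what is needed. Your observation that arrows never spawn and only migrate leftward does show that the only arrows that can ever occupy a fixed position $k<j$ are the finitely many initially located in $[k,j)$. What is missing is the fact that each such arrow eventually moves \emph{strictly past} $k$: an arrow sitting at a coordinate where the first layer reads $\cero$ or $\uno$ does not advance, so a priori it could be parked there forever. This is exactly where $\gamma_1(x_j)=\vacio$ enters, via Proposition \ref{Siempre hay una salida}: at every coordinate to the left of a $\vacio$ in the first layer, the first layer takes a value in $\{\vacio,\dos\}$ infinitely often, so every arrow at a position $\leq j$ advances infinitely often and in particular eventually passes $k$ and never returns. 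Combining that with your finiteness observation yields the time $N$ after which coordinate $k$ is arrow-free, which is the paper's argument. So your write-up needs to be repaired by restating the lemma with $k\leq j$ and adding this second step.
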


\begin{proof}
Using Proposition \ref{Siempre hay una salida} we can see that every arrow on a position $k\geq j$ will eventually move to the left (since they move when $\gamma_1$ is $\vacio$ or $\dos$). For every $k\geq j$ we have that $\gamma_{2}(x_n)=\flechai$ for only finitely many $n>k$. Since arrows only move to the left one can conclude the result. 
\end{proof}

The following statement will be used to show that $(A^{\ZZ},T)$ is not almost equicontinuous. 

\begin{proposition}\label{ningunpunto-m-e-p}
Let $m>0$ and $w\in A^{2m+1}$. There exists $x,y\in A^{\ZZ}$ with $x_{[-m,m]}=w=y_{[-m,m]}$ such that $T^{n}x_{0}\neq T^{n}y_{0}$ for some $n>0$.
\end{proposition}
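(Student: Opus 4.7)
Given $w \in A^{2m+1}$, define $x \in A^\ZZ$ by $x_i = \vacio$ for $i \notin [-m,m]$ and $x_{[-m,m]}=w$, and, for an integer $P > m$ to be chosen, define $y \in A^\ZZ$ to agree with $x$ everywhere except at position $P$, where $y_P = \flechai$. Since $\gamma_1(\flechai)=\vacio=\gamma_1(x_P)$, the two sequences have identical first coordinates, and thus $\gamma_1(T^n x) = \gamma_1(T^n y)$ for every $n$. Moreover $\gamma_1(x)_{m+1}=\vacio$, so Proposition \ref{Siempre hay una salida} applied to $\gamma_1(x)$ gives, for every $q \leq m+1$, infinitely many $n$ with $\gamma_1(T^n x)_q \in \{\vacio,\dos\}$.

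The plan is to choose $P$ large enough that the extra arrow in $y$ reaches position $0$ at a moment when position $0$ of $x$ carries no arrow, producing the required distinction. First we show that arrows in $x$ eventually abandon the interval $[-m,m]$. Let $R(n) := \sup\{i : \gamma_2(T^n x)_i = \flechai\}$. Since the rule for $\gamma_2(Tx)_i$ reads only positions $i,i+1$ and yields $\flechai$ only when one of them already lies in $\{\flechai,\dosf\}$, we have $R(n+1)\leq R(n)$. A case analysis of the rule at the rightmost arrow shows that whenever $\gamma_1(T^n x)_{R(n)} \in \{\vacio,\dos\}$---so that the symbol at $R(n)$ is $\flechai$ or $\dosf$---the arrow at $R(n)$ is vacated (since the shift clause applies at $i=R(n)$ and $\gamma_2$ at $R(n)+1$ is $\vacio$) while an arrow appears at $R(n)-1$, with possibly one additional ``wake'' arrow still further to the left but never to the right of $R(n)$. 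Together with the transparency observation above, $R$ strictly decreases infinitely often, so $R(n)\to -\infty$; fix $N_w$ with $R(n) < -m$ for all $n \geq N_w$.

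Now choose $P > N_w + m + 1$. For $0 \leq n \leq P-m-1$, a straightforward induction---using that in the all-$\vacio$ zone $(m,\infty)$ the local rule for $T$ acts as the shift on $\gamma_2$---shows that $T^n x$ and $T^n y$ differ only at position $P-n$, where $y$ carries the lone extra $\flechai$. Setting $n_0 = P - m - 1 \geq N_w$, both $T^{n_0} x$ and $T^{n_0} y$ have arrow-free $[-m,m]$, so in $T^{n_0}y$ the only arrow in $[-m,\infty)$ is the incoming one at position $m+1$, and nothing remains to absorb it as it enters $w$'s region. For $n \geq n_0$ this isolated arrow advances from its current position $q$ to $q-1$ whenever $\gamma_1(T^n y)_q \in \{\vacio, \dos\}$; by Proposition \ref{Siempre hay una salida} this occurs after bounded waiting time at each $q \in [0, m+1]$, so the arrow reaches position $0$ at some time $\tau \geq N_w$. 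Then $\gamma_2(T^\tau y)_0 = \flechai$ whereas $\gamma_2(T^\tau x)_0 = \vacio$, and with matching first coordinates we conclude $T^\tau x_0 \neq T^\tau y_0$. The main obstacle is establishing $R(n)\to -\infty$: one must enumerate the local configurations at the rightmost arrow (including the ``wake'' produced when a transparent arrow sits immediately to the left of an opaque one) to verify that the rightmost arrow is genuinely vacated once its background becomes transparent, and that no wake ever appears to the right of it.
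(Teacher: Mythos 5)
Your proof is correct and follows essentially the same route as the paper: both take $x$ to be $w$ padded with $\vacio$, use the fact that the arrows originating in $w$ eventually vacate $[-m,m]$ (this is Lemma \ref{palabrasinfantasmas}, which you re-derive via your $R(n)$ argument instead of simply citing), and then use Proposition \ref{Siempre hay una salida} to bring an arrow in from the right so that it sits at position $0$ of $y$ at a time when position $0$ of $x$ is arrow-free. The only real difference is that the paper sets $y_i=\flechai$ for \emph{all} $|i|>m$, so it suffices to observe that position $0$ of $y$ carries an arrow infinitely often, whereas your single-arrow perturbation forces you to track the lone arrow's journey explicitly (the shift behaviour on the all-$\vacio$ zone, the advance-or-wait case analysis at each $q\in[0,m+1]$) --- extra but sound bookkeeping that the paper's flood of arrows renders unnecessary.
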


\begin{proof}
Let $x,y$ as in the hypothesis and with $x_{i}=\vacio$ and $y_{i} =\flechai$, for every $i\in \ZZ$ such that $|i|>m$.  By Lemma \ref{palabrasinfantasmas} we have that there exists $N>0$ such that for all $n\geq N$ we have that $\gamma_{2}(T^{n}x_{i})=\vacio$ for all $i\in [-m,m]$. Using that $y_{i}=\flechai$ for all $i> m$ and Proposition \ref{Siempre hay una salida} we obtain that $\{n\in \NN:\gamma_2(T^ny_0)=\flechai\}$ is infinite. Hence, there exists $N'\in \{n\in \NN:\gamma_2(T^ny_0)=\flechai\}$ such that $N'>N$ and $T^{N'}x_{0}\neq T^{N'}y_{0}$.


\end{proof}

In the following lemma we will use the sets $S_{\{ -i,i \} }(x,m)$ from Definition \ref{S_J}.  

\begin{remark}
	\label{rem:characS}
	Let $m>0$ and $x\in A^{\ZZ}$ so that $\gamma_2(x_i)=\vacio$ for all $i\in \ZZ$ and $\gamma_{1}( x_{m})=\vacio$. Then $$S_{\{ j\} }(x,m)=\{ i\in \NN : \exists \ y\in B_{m}(x), \ \gamma_{2}(T^{i}y_{j})=\flechai \}.
	$$ for all $|j|\leq m$.
\end{remark}
\begin{lemma}\label{densidadpuertas}

Let $l\geq 0$ and $x\in A^{\ZZ}$ such that $x_{[0,2^{l}+1]} = \vacio \ \textnormal{\cero}^{2^{l}} \vacio$. Then

$$\overline{D} (S_{\{0\}}(x,2^{l}+1)) \leq \frac{1}{3^{l+1}} .$$ 

\end{lemma}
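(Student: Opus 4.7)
The plan is to directly invoke Lemma \ref{pared k=l} for every point in the ball $B_{2^{l}+1}(x)$ and then estimate the density of the exceptional set of times.

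First, I would note that every $y\in B_{2^{l}+1}(x)$ satisfies $y_{[0,2^{l}+1]}=x_{[0,2^{l}+1]}=\vacio\ \textnormal{\cero}^{2^{l}}\vacio$, since by definition $y_i=x_i$ for every $i\in[-(2^{l}+1),2^{l}+1]$. Thus the hypothesis of Lemma \ref{pared k=l} is satisfied by $y$ (not just by $x$), so for every such $y$ we have
$$
T^{n}y_{0}=\vacio \quad \text{whenever } n\notin P_{l}:=\{k(3^{l+1})+2(3^{l})+1 : k\geq 0\}.
$$

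Next, the key observation is that if $n\notin P_{l}$, then $T^{n}y_{0}=\vacio=T^{n}z_{0}$ for \emph{every} pair $y,z\in B_{2^{l}+1}(x)$, so $n\notin S_{\{0\}}(x,2^{l}+1)$ by the very definition of $S_{\{0\}}$. Consequently,
$$
S_{\{0\}}(x,2^{l}+1)\subseteq P_{l}.
$$

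Finally, I would compute the upper density of $P_{l}$. Since $P_{l}$ contains exactly one element in each interval of the form $[k\cdot 3^{l+1},(k+1)\cdot 3^{l+1})$, we have $\sharp(P_{l}\cap[0,n-1])\leq \lceil n/3^{l+1}\rceil$, so $\overline{D}(P_{l})=\frac{1}{3^{l+1}}$, and by monotonicity of upper density the claimed bound follows. This step is routine and there is no real obstacle; the content of the lemma is essentially the combinatorial bookkeeping of Lemma \ref{pared k=l} transported from a single point $x$ to the whole neighborhood $B_{2^{l}+1}(x)$, which works because Lemma \ref{pared k=l} only depends on the finite pattern $\vacio\ \textnormal{\cero}^{2^{l}}\vacio$ shared by all members of that ball.
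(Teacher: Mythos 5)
Your proof is correct and follows essentially the same route as the paper: both reduce the statement to Lemma \ref{pared k=l}, deduce the containment $S_{\{0\}}(x,2^{l}+1)\subseteq\{k(3^{l+1})+2(3^{l})+1 : k\geq 0\}$, and compute the upper density of that arithmetic progression to be $1/3^{l+1}$. The only difference is that you spell out the (correct) observation that Lemma \ref{pared k=l} applies uniformly to every $y\in B_{2^{l}+1}(x)$ because the ball fixes the coordinates $[0,2^{l}+1]$, a step the paper leaves implicit.
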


\begin{proof}

By Lemma \ref{pared k=l} we have that 
$$S_{\{0\}}(x,2^{l}+1) \subseteq \{ i\in \NN : i=3^{l+1}k+2(3^{l})+1 \ \forall \ k\geq 0 \}.$$
Therefore,
$$\overline{D}(S_{\{0\}}(x,2^{l}+1)) \leq \overline{D}( \{ i\in \NN : i=3^{l+1}k+2(3^{l})+1 \ \forall \ k\geq 0 \} )$$ 
$$ = \limsup_{k\to \infty}\frac{k+1}{3^{l+1}k+2(3^{l})+1}=\frac{1}{3^{l+1}} .$$
\end{proof}

\begin{lemma}\label{densidadededosspuertas}
Let $l\geq 0$, $m =2^{l}+2^{l-1}+2$ and $x\in A^{\ZZ}$ with $x_{[0,m]}=\vacio \ \textnormal{\cero}^{2^{l-1}} \vacio \ \textnormal{\cero}^{2^{l}} \ \vacio$
Then
\begin{displaymath}
\begin{array}{rcl}

\overline{D}(S_{\{ i\} }(x,m)) \leq  (2(3^{l-1})+1)\overline{D}(S_{\{ 2^{l-1}+1\} }(x,m)) \text{, and}
\\
\\
\overline{D}(S_{\{0\}}(x,m)) = 
\overline{D}(S_{\{ 2^{l-1}+1\} }(x,m))

\end{array}
\end{displaymath}
for all $ i\in [1, 2^{l-1}]$.
\end{lemma}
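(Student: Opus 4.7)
The plan is to track single arrows as they propagate through the smaller $\cero$-block, applying Lemma~\ref{pared k=l} to each of the two blocks to pin down the relevant times by congruences.

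First, apply Lemma~\ref{pared k=l} (using the shift-invariance of $T$) to the bigger block at positions $[2^{l-1}+1,m]$: every $n' \in S_{\{2^{l-1}+1\}}(x,m)$ satisfies $n' \equiv 2\cdot 3^l + 1 \pmod{3^{l+1}}$. In particular $n' \equiv 1 \pmod{3^l}$, and any two such arrivals are at least $3^{l+1}$ apart. Applied with parameter $l-1$ to the smaller block at positions $[0, 2^{l-1}+1]$, the same lemma gives $S_{\{0\}}(x,m) \subseteq \{n : n \equiv 2\cdot 3^{l-1}+1 \pmod{3^l}\}$.

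The key claim is that an arrow at position $2^{l-1}+1$ at time $n' \in S_{\{2^{l-1}+1\}}(x,m)$ propagates deterministically through the small block and arrives at position $0$ at the exact time $n'' = n' + 2\cdot 3^{l-1}$. The $\gamma_1$-dynamics inside $[0, 2^{l-1}+1]$ are periodic with period $3^l$ (Proposition~\ref{lemaNpuertas}), depend only on the fixed block pattern, and are in the specific phase determined by $n' \equiv 1 \pmod{3^l}$; a position-by-position trace using Lemma~\ref{lem:basico} together with the nested window description of Proposition~\ref{lemaNpuertas} shows that the waits at the intermediate positions sum to exactly $2\cdot 3^{l-1}$ (equivalently, one can prove this inductively on $l$). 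During this single-arrow trace, any intermediate position $i \in [1, 2^{l-1}]$ is occupied by the arrow only at times in $\{n', n'+1, \ldots, n'+2\cdot 3^{l-1}\}$. Since consecutive arrivals in $S_{\{2^{l-1}+1\}}$ are $3^{l+1} > 2\cdot 3^{l-1}$ apart, traces of distinct single arrows never overlap or merge.

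Combining these ingredients yields both statements. For the inequality, the trace gives
\[
S_{\{i\}}(x,m) \;\subseteq\; \bigcup_{n' \in S_{\{2^{l-1}+1\}}(x,m)} \{n',\, n'+1,\, \ldots,\, n' + 2\cdot 3^{l-1}\};
\]
each block on the right contains $2\cdot 3^{l-1}+1$ integers, so counting cardinalities on $[0,N]$ and taking the $\limsup$ produces the factor $2\cdot 3^{l-1}+1$. For the equality, the map $n' \mapsto n' + 2\cdot 3^{l-1}$ is a bijection between $S_{\{2^{l-1}+1\}}(x,m)$ and $S_{\{0\}}(x,m)$: the forward inclusion is the key claim, and the backward inclusion follows by tracing any arrow at $y_0$ backward along the same deterministic path (the wide spacing of arrivals makes this inverse well-defined). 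A constant shift preserves upper density, so the two densities agree. The main obstacle is the verification of the exact transit time $2\cdot 3^{l-1}$, where all the care is concentrated; once this is in place, the remainder is a routine cardinality computation.
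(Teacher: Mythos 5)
Your proposal follows essentially the same route as the paper: the inequality comes from bounding the time an arrow can dwell in the small block by $2\cdot 3^{l-1}$ (yielding the factor $2\cdot 3^{l-1}+1$ by counting), and the equality comes from the exact transit time $2\cdot 3^{l-1}$ from position $2^{l-1}+1$ to position $0$, which identifies $S_{\{0\}}(x,m)$ as a constant shift of $S_{\{2^{l-1}+1\}}(x,m)$. Both you and the paper defer the same careful verification of that transit time to Proposition~\ref{lemaNpuertas} and a position-by-position trace, so the arguments match in substance and in level of detail.
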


\begin{proof}
	
 By Lemma \ref{lemaNpuertas} we have that
\begin{itemize}
      \item $\gamma_{1}(T^{n}x_{1})=\cero$ for all $0\leq n <3^{l-1}$;
      \item $\gamma_{1}(T^{n}x_{1})=\uno$ for all $3^{l-1}\leq n <2(3^{l-1})$;
      \item $\gamma_{1}(T^{n}x_{1})=\dos$ for all $2(3^{l-1})\leq n <3^{l}$.
\end{itemize}
Let $ 1\leq i\leq 2^{l-1}$. Any arrow on this region will not move for at most $2(3^{l-1})$ iterations, that is
for any $y\in B_{m}(x)$ we have that $\gamma_{2}(T^{n}y_{i})=\flechai$ for at most $2(3^{l-1})$ consecutive $n$.
Hence, by Remark \ref{rem:characS} we can conclude that
$$ \sharp (S_{\{ i\} }(x,m)\cap [0,(k+1)3^{l+1}]) \leq  (2(3^{j})+1)\sharp (S_{\{ 2^{l-1}+1\} }(x,m)\cap [0,(k+1)3^{l+1}]),$$
for all $k\geq 0$.
Therefore,
$$\overline{D}(S_{\{ i\} }(x,m))\leq (2(3^{l-1})+1)\overline{D}(S_{\{ 2^{l-1}+1\} }(x,m)) .$$


Now, for $y\in B_{m}(x)$ one can check that $\gamma_2(T^nx_{2^{l-1}+1})=\flechai$ if and only if $\gamma_2(T^{n+2(3^{l-1})}x_{0})=\flechai$ (see Proposition \ref{lemaNpuertas} and Lemma \ref{pared k=1}). Using this and Remark \ref{rem:characS}, we obtain that
 $$S_{\{0\}}(x,m)=S_{\{2^{l-1}+1\}}(x,m) +2(3^{l-1}).$$ 
 Therefore,
$$\overline{D} (S_{\{0\}}(x,m)) = \overline{D}(S_{\{ 2^{l-1}+1\} }(x,m)).$$


\end{proof}

\begin{proposition}\label{punto-d-m-e-propo}
Let $k>0$, $w\in A^{k}$ and 
$$x:= \infinito\vacio .w \vacio \ \textnormal{\cero \vacio  \cero}^{2}  \textnormal{\vacio \cero}^{2^{2}} \cdots \textnormal{\cero}^{2^{n}} \cdots.$$ 
We have that $x$ is a diam-mean equicontinuity point.

\end{proposition}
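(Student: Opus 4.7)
The plan is to verify the combinatorial criterion of Proposition~\ref{D-M-E-P}: for every $m\geq 0$, I will produce $m'$ such that $\overline{D}(S_{\{-j,j\}}(x,m'))\leq 2^{-(m+2)}$ for all $0\leq j\leq m+1$. Write $p_s=k+s+2^s-1$ for the position of the $s$-th $\vacio$-wall of $x$ to the right of $w$, so that the $\cero$-block between $p_{s-1}$ and $p_s$ has length $2^{s-1}$. Given $m$, take $m'=p_{n+1}$ for a large $n$ chosen at the end; then the window $[-m',m']$ captures the full nested structure $\vacio\cero\vacio\cero^{2}\vacio\cdots\cero^{2^{n}}\vacio$. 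By the blocking property of $\vacio$ (Remark~\ref{rem}), the walls freeze the first coordinate $\gamma_1(T^{i}y_j)$ for every $y\in B_{m'}(x)$ and every $j\in[-(m+1),m+1]$, so discrepancies between $y,z\in B_{m'}(x)$ on $\{-j,j\}$ occur only in the second coordinate and must come from arrows injected at positions beyond $m'$; arrows in $w$ are identical in $y$ and $z$, and arrows at positions $<-m'$ are irrelevant since arrows only move left.

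The central estimate is an iterated use of Lemmas~\ref{densidadpuertas} and~\ref{densidadededosspuertas}. For each $s\in[1,n]$ the sub-structure $\vacio\cero^{2^{s-1}}\vacio\cero^{2^{s}}\vacio$ occupies $[p_{s-1},p_{s+1}]\subseteq[-m',m']$, and the deterministic shift identity extracted from the proof of Lemma~\ref{densidadededosspuertas} --- namely that, for every $y\in B_{m'}(x)$, $\gamma_2(T^{n}y_{p_s})=\flechai$ iff $\gamma_2(T^{n+2\cdot 3^{s-1}}y_{p_{s-1}})=\flechai$ --- yields
\[
\overline{D}(S_{\{p_{s-1}\}}(x,m')) = \overline{D}(S_{\{p_s\}}(x,m')).
\]
Iterating this chain and closing it with Lemma~\ref{densidadpuertas} applied at the innermost wall $p_n$ (whose right block is $\cero^{2^n}$) gives $\overline{D}(S_{\{p_s\}}(x,m'))\leq 3^{-(n+1)}$ for every $s\leq n$. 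The second assertion of Lemma~\ref{densidadededosspuertas} then produces, for $j\in[p_{s-1}+1,p_s-1]$ with $s\leq \bar s$ (where $\bar s$ is the smallest index with $p_{\bar s}\geq m+1$), the bound $\overline{D}(S_{\{j\}}(x,m'))\leq (2\cdot 3^{s-1}+1)\cdot 3^{-(n+1)}\leq 3^{\bar s-n-1}$.

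It remains to treat $j$ in the ``left region'' $[-(m+1),k-1]$. For $j\in[0,k-1]$, any migrating arrow from beyond $m'$ must first pass through $p_0$; since $\gamma_1$ on $[0,k]$ is eventually periodic (Remark~\ref{rem}), the dwell time of such an arrow inside $[j,p_0]$ is bounded by some constant $C=C(w,k)$, which gives $\overline{D}(S_{\{j\}}(x,m'))\leq C\cdot 3^{-(n+1)}$. For $j<0$ the fact that $\gamma_1\equiv\vacio$ on $\{i<0\}$ forces each arrow to propagate leftward at rate exactly one once it enters that half-line, so $\overline{D}(S_{\{j\}}(x,m'))\leq \overline{D}(S_{\{0\}}(x,m'))\leq C\cdot 3^{-(n+1)}$. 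Picking $n$ so large that $\max(3^{\bar s-n-1},C\cdot 3^{-(n+1)})\leq 2^{-(m+3)}$ and using the subadditivity $\overline{D}(S_{\{-j,j\}})\leq \overline{D}(S_{\{-j\}})+\overline{D}(S_{\{j\}})$ finishes the verification, and Proposition~\ref{D-M-E-P} then concludes that $x$ is a diam-mean equicontinuity point.

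The step I expect to be most delicate is the iterated chain in the second paragraph: Lemma~\ref{densidadededosspuertas} is stated with a specific ball size centered at position $0$, so a careful shift argument using $T\sigma=\sigma T$ combined with the monotonicity $B_{m'}(x)\subseteq B_{m_s}(\sigma^{p_{s-1}}x)$ is needed to transfer the equalities of densities to the single global ball $B_{m'}(x)$. The dwell-time bookkeeping for the word region is routine once the eventual periodicity of $\gamma_1$ on $[0,k]$ is invoked, but it is where the arbitrariness of $w$ enters the estimate and forces the constant $C(w,k)$ to depend on both $w$ and $k$.
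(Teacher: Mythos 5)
Your proposal follows essentially the same route as the paper's proof: reduce to the combinatorial criterion of Proposition \ref{D-M-E-P}, discard the initial arrows in $w$ via Lemma \ref{palabrasinfantasmas}, iterate the density identity of Lemma \ref{densidadededosspuertas} along the nested walls and close the chain with Lemma \ref{densidadpuertas} at the largest block, then treat the word region via the blocking property (Remark \ref{rem}) and the negative coordinates via the leftward motion of arrows. The differences are only in bookkeeping (your explicit wall positions $p_s$ and constant $C(w,k)$ versus the paper's choice of $l$ with $k<2^{l}$), so the argument is correct and not substantively different.
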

\begin{proof}

	We will prove that $x$ is a diam-mean equicontinuity point with the use of Proposition \ref{D-M-E-P}. Let $m\geq 0$. First notice that, without loss of generality, we may assume that $\gamma_{2}(w_{i})=\vacio$ for every $1\leq i \leq k$ (from Lemma \ref{palabrasinfantasmas} there exists $M>0$ such that $\gamma_{2}( T^{M}x_{i}) = \vacio$ for all $0\leq i <k$). 
	Let $l>0$ so that $k<2^l$ and
$$ \frac{2(3^{l-1}+1)}{3^{l+1}}\leq \frac{1}{2^{m+2}}.$$
Let $k\leq j\leq  k+l+\sum_{i=0}^{l-1} 2^{i}$. By applying  Lemma \ref{densidadededosspuertas} recurrently (and using that $k<2^l$), we have that  
\begin{equation}\label{EqPropDMP}
\overline{D}(S_{\{j\} }(x,k+l+\sum_{i=0}^{l} 2^{i})) \leq  2(3^{l-1}+1)\overline{D}(S_{\{ k+l+\sum_{i=0}^{l-1} 2^{i}\} }(x,k+l+\sum_{i=0}^{l} 2^{i})).
\end{equation}
Recall, by Remark \ref{rem}, that $\gamma_{1}(T^{n}x_{[0,k]})=\gamma_{1}(T^{n}y_{[0,k]})$ for all $y\in B_{k+l+\sum_{i=0}^{l}2^{i}}(x)$ and all $n\geq 0$. Hence, since $k<2^l$ we can conclude  all $j\leq  k+l+\sum_{i=0}^{l-1} 2^{i}$ satisfies Equation \eqref{EqPropDMP}.
By Lemma \ref{densidadpuertas} and the choice of $l$ we obtain that
\begin{displaymath}
	\begin{array}{rcl}
		\overline{D}(S_{\{j\} }(x,k+l+\sum_{i=0}^{l} 2^{i}))  &\leq &
		\frac{2(3^{l-1}+1)}{3^{l+1}} \\
		&\leq &
		\frac{1}{2^{m+2}},
	\end{array}
\end{displaymath}
for all $j\leq  k+l+\sum_{i=0}^{l-1} 2^{i}$. 

It is not hard to check that that$$\overline{D}(S_{\{-j\} }(x,k+l+\sum_{i=0}^{l} 2^{i}))\leq \overline{D}(S_{\{ 0\} }(x,k+l+\sum_{i=0}^{l} 2^{i}))$$for every $j>0$.


Therefore, by Proposition \ref{D-M-E-P}, we have that $x$ is a diam-mean equicontinuity point.

\end{proof}

\begin{theorem}
	\label{thm:main}
The CA $(A^{\ZZ},T)$ is almost diam-mean equicontinuous but not almost equicontinuous. 
\end{theorem}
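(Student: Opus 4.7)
The theorem splits into two parts, and I would handle each by directly invoking a proposition already established; no serious obstacle arises beyond a short shift-invariance check.

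For the failure of almost equicontinuity, the plan is to use Proposition~\ref{ningunpunto-m-e-p} with $\e=1/2$. Given any $x\in A^{\ZZ}$ and any $\d>0$, I would pick $m$ with $2^{-m}<\d$ and set $w=x_{[-m,m]}$; the proposition then produces $y,z\in B_m(x)\subseteq B_{\d}(x)$ and $n>0$ with $T^n y_0\neq T^n z_0$, so $d(T^n y,T^n z)=1$ and $\diam(T^n B_{\d}(x))\geq 1>\e$. Since this holds for arbitrary $x$ and $\d$, no point is an equicontinuity point; in fact the equicontinuity set is empty, and hence $(A^{\ZZ},T)$ is certainly not almost equicontinuous.

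For almost diam-mean equicontinuity, by Proposition~\ref{pro sigma} it suffices to show that the set $EQ$ of diam-mean equicontinuity points is dense. I would first record that $EQ$ is shift-invariant: from $T\sigma=\sigma T$ one has $T^i\sigma^k x=\sigma^k T^i x$, and since $\sigma^k$ distorts diameters by at most a factor of $2^{|k|}$, the limsup defining diam-mean equicontinuity rescales but remains finite, so $x\in EQ$ implies $\sigma^k x\in EQ$ for every $k\in\ZZ$. Any nonempty open set contains a cylinder $[u]_{[a,b]}$, and by translating by $-a$ it then suffices to find a point of $EQ$ in the reindexed cylinder $[u']_{[0,c]}$ with $c=b-a$.

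To conclude, I would take $k=c+1$ and $w=u'\in A^k$, and apply Proposition~\ref{punto-d-m-e-propo}: the point it constructs (of the form $\infinito\vacio . w \vacio \cero \vacio \cero^{2} \vacio \cero^{2^{2}}\cdots$) lies in $EQ$ and has coordinates equal to $w=u'$ on $[0,c]$, so it lies in the required cylinder. Shifting back by $a$ then yields a point of $EQ$ in $[u]_{[a,b]}$, establishing density of $EQ$ and hence, by Proposition~\ref{pro sigma}, its residuality. This completes the proof. The only step that is not pure bookkeeping is the shift-invariance of diam-mean equicontinuity, which is a short and routine computation from the definitions; all the analytical weight of the theorem has been absorbed into Propositions~\ref{ningunpunto-m-e-p} and~\ref{punto-d-m-e-propo}.
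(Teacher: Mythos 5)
Your proposal is correct and follows essentially the same route as the paper: non-equicontinuity is read off from Proposition~\ref{ningunpunto-m-e-p}, and almost diam-mean equicontinuity follows from the density of $EQ$ supplied by Proposition~\ref{punto-d-m-e-propo} together with Proposition~\ref{pro sigma}. The only difference is that you spell out the routine bookkeeping (shift-invariance of $EQ$ and the cylinder argument for density) that the paper leaves implicit; just note that in the shift-invariance step the relevant point is that the limsup can still be made smaller than any $\e$ after rescaling, not merely that it stays finite.
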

\begin{proof}
From Proposition \ref{punto-d-m-e-propo} we have $EQ$ is dense. Hence, by Proposition \ref{pro sigma}, $EQ$ is residual. So, $(A^{\ZZ},T)$ is almost diam-mean equicontinuous. By Proposition \ref{ningunpunto-m-e-p} there are no equicontinuity points. Therefore, $(A^{\ZZ},T)$ is not almost equicontinuous. 
\end{proof}






\section{Diam-mean sensitivity}
A TDS, $(X,T)$, is \textbf{sensitive} if there exists $\e >0$ such that for every non-empty open set $U\subseteq X$ there exist $x,y\in U$ and $n>0$ 
such that $$d(T^{n}x,T^{n}y)> \e.$$

Diam-mean sensitivity was introduced in \cite{garciajagerye}.
\begin{definition}
Let $(X,T)$ be a topological dynamical system. We say that $(X,T)$ is \textbf{diam-mean sensitive} if there exists $\e >0$ such that for every open set $U$ we have 
$$\limsup_{n\to \infty}\frac{\sum_{i=1}^{n}\operatorname{diam}(T^{i}U)}{n} > \e .$$
\end{definition}

We say a TDS is \textbf{transitive} if for every pair of non-empty open sets $U$ and $V$ there exists $n>0$ such that $T^{-n}U\cap V\neq \emptyset$.

\begin{theorem}\label{Teo:Dicotomianormaldiammean}
	Let $(X,T)$ be a transitive TDS. 
	\begin{itemize}
		\item $(X,T)$ is either almost equicontinuous or sensitive \cite{kurka1997languages}; and
		\item $(X,T)$ is either almost diam-mean equicontinuous or diam-mean sensitive \cite{weakforms}.
		\end{itemize}
\end{theorem}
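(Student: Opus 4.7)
The plan is to treat both parts of the dichotomy by the same Auslander--Yorke style argument, since Proposition \ref{pro sigma} already supplies the structural ingredient needed for the diam-mean version. In both cases I would set up an open ``$\varepsilon$-good'' set, use the failure of sensitivity to show it is non-empty, use transitivity to upgrade non-emptiness to density, and then apply Baire to get residuality.

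For the first part, let
$$EQ_\varepsilon^{\mathrm{eq}} = \{x\in X : \exists\, \delta>0,\ \diam(T^i B_\delta(x))<\varepsilon \text{ for all } i\in\NN\}.$$
This set is open by a direct continuity argument. Assume $(X,T)$ is not sensitive; then for every $\varepsilon>0$ there is a non-empty open set $U$ with $\diam(T^iU)<\varepsilon$ for every $i$, and any point in $U$ (with $\delta$ small enough so that $B_\delta(x)\subset U$) witnesses that $EQ_\varepsilon^{\mathrm{eq}}\neq\emptyset$. Transitivity implies that the forward orbit of some point is dense; since $EQ_\varepsilon^{\mathrm{eq}}$ is open and invariant under pre-images in the sense that if $x\in EQ_\varepsilon^{\mathrm{eq}}$ then every point whose orbit visits $B_\delta(x)$ also lies in $EQ_\varepsilon^{\mathrm{eq}}$, one deduces $EQ_\varepsilon^{\mathrm{eq}}$ is dense. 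Finally the equicontinuity points are $\bigcap_{m\geq 1}EQ_{1/m}^{\mathrm{eq}}$, which is a dense $G_\delta$ by Baire, so $(X,T)$ is almost equicontinuous.

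For the second part, the same skeleton works verbatim with the diam-mean sets $EQ_\varepsilon$ of Proposition \ref{pro sigma}, which are already shown to be open there. If $(X,T)$ is not diam-mean sensitive, then for every $\varepsilon>0$ there is a non-empty open $U$ with
$$\limsup_{n\to\infty}\frac{1}{n}\sum_{i=1}^{n}\diam(T^i U) < \varepsilon,$$
and any $x\in U$ with $B_\delta(x)\subset U$ lies in $EQ_\varepsilon$, so $EQ_\varepsilon\neq\emptyset$. I would then promote non-emptiness to density using transitivity: pick a point $z$ with dense orbit, choose $k$ with $T^kz\in EQ_\varepsilon$, and observe that the $\limsup$ of Cesàro averages of diameters is invariant under dropping finitely many initial iterates, so $z$ itself sits in $EQ_\varepsilon$ (after shrinking $\delta$ using continuity of $T^k$); hence every neighborhood meets $EQ_\varepsilon$. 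Then $EQ=\bigcap_m EQ_{1/m}$ is residual by Baire, and by Proposition \ref{pro sigma} the proof is complete.

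The one delicate step, and the place where I expect to have to be careful, is the transitivity-to-density step in the diam-mean case: one must verify that the $\limsup$ of Cesàro averages of diameters really does transfer from $T^kz$ back to $z$ after an appropriate contraction of the radius. This hinges on the uniform continuity of the finite iterate $T^k$, which shrinks a ball around $z$ into a ball around $T^kz$, combined with the fact that a finite prefix contributes $0$ to the $\limsup$. Both steps are routine but are the heart of the argument; everything else (openness, Baire) is formal once they are in place. Since both halves of the theorem are quoted from \cite{kurka1997languages} and \cite{weakforms}, I would finish by citing these sources rather than repeating the full details.
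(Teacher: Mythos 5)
Your argument is correct: both halves follow the standard Auslander--Yorke scheme (open $\varepsilon$-good sets $EQ_\varepsilon$, non-sensitivity gives non-emptiness, transitivity gives density because $\bigcup_{n}T^{-n}U$ is dense and is contained in $EQ_\varepsilon$ after shrinking radii over the finitely many initial iterates, and Baire gives residuality), and the delicate step you flag --- transferring the Ces\`aro $\limsup$ from $T^k z$ back to $z$ --- does work, since the $k$ dropped terms contribute at most $k\cdot\diam(X)/(n+1)\to 0$. The paper itself offers no proof of this theorem, only the citations to \cite{kurka1997languages} and \cite{weakforms}, and your sketch is a faithful reconstruction of the arguments given there.
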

For other related dichotomies see \cite{huang2018analogues}. 

The first part of the previous theorem holds for non-necessarily transitive CA. 
\begin{theorem}
	[\cite{kurka1997languages}]
	\label{thm:dicotomiaCA}
	Let $(X,T)$ be a CA. Then $(X,T)$ is either almost equicontinuous or sensitive.
\end{theorem}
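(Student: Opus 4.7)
The plan is to prove the contrapositive: if $(X,T)$ is not sensitive, then it is almost equicontinuous. The argument centers on the notion of a \emph{blocking word}, generalizing Remark \ref{rem}: a finite word $w$ which, whenever it appears in a configuration, blocks the propagation of information across its position, so that the evolution of the cells it occupies (and nearby cells) is independent of what lies outside a fixed neighborhood of $w$.

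First I would extract such a blocking word from the failure of sensitivity. If $(X,T)$ is not sensitive, then for every $r > 0$ there exists a non-empty open set $U_r$ with $\diam(T^n U_r) \leq 2^{-r}$ for all $n \geq 0$. Shrinking $U_r$ to a basic cylinder, I obtain a finite word $w_r$ on some interval $[-M, M]$ (with $M = M(r)$) such that any two points $x, y$ extending $w_r$ satisfy $(T^n x)_i = (T^n y)_i$ for every $i \in [-r, r]$ and $n \geq 0$. Choosing $r$ larger than the radius of the CA, the protected segment $[-r, r]$ is wide enough that, by induction on $n$ and using the local rule together with shift-commutation, one upgrades the centered blocking to a genuine wall property: wherever $w_r$ occurs at position $k$ in a configuration, the iterates on a window around $k$ depend only on $w_r$ itself, not on anything outside $[k - M, k + M]$. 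Consequently, two configurations agreeing on an interval $[k_- - M, k_+ + M]$ that contains two flanking occurrences of $w_r$ at positions $k_- < k_+$ have iterates that agree on the entire interval $[k_-, k_+]$ at all times.

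Second, I would use this wall property to identify a large set of equicontinuity points. Take any $x$ that contains $w_r$ at positions tending to both $+\infty$ and $-\infty$. Given $\e = 2^{-r_0}$, pass to the blocking word $w_{r_0}$, pick occurrences $k_- < -r_0$ and $k_+ > r_0$ of it in $x$, and choose $\delta$ so that $B_\delta(x)$ fixes $x$ on $[k_- - M, k_+ + M]$. By the wall property, every $y \in B_\delta(x)$ satisfies $(T^n y)_{[-r_0, r_0]} = (T^n x)_{[-r_0, r_0]}$ for all $n \geq 0$, hence $\diam(T^n B_\delta(x)) \leq 2^{-r_0}$. To conclude residuality, I would invoke a Baire category argument: for each $r$ and each $N$, the set of configurations containing $w_r$ at some position beyond $\pm N$ is open and dense (any cylinder can be extended by pasting $w_r$ arbitrarily far outside), so the countable intersection produces a dense $G_\delta$ of configurations on which $w_r$ occurs unboundedly in both directions for every $r$. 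These configurations are equicontinuity points, so $EQ$ is residual and $(X,T)$ is almost equicontinuous.

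The main obstacle is the promotion in step one of a centered stable cylinder into a genuine wall: non-sensitivity initially only supplies agreement on a fixed window $[-r, r]$ for configurations matching on $[-M, M]$, and one must combine shift-commutation of $T$ with an induction on $n$ driven by the local rule in order to show that the protected values at later times truly cannot leak information across $w_r$ in either direction. Once this wall property is in hand, the density of candidate equicontinuity points and the Baire category conclusion proceed by standard symbolic arguments.
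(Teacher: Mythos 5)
Your proposal is correct and is essentially Kurka's original blocking-word argument: extract an $r$-blocking word (with $r$ at least the radius of the local rule) from the failure of sensitivity, upgrade it to a two-sided wall, and conclude via Baire category that configurations containing the wall unboundedly in both directions form a residual set of equicontinuity points. The paper itself gives no proof of Theorem \ref{thm:dicotomiaCA} and simply cites \cite{kurka1997languages}, whose proof follows exactly this route, so there is nothing to contrast.
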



 Sensitivity on cellular automata has been studied in several papers (e.g. \cite{gilman1987classes,guckenheimer,garcia2016limit}).
We find it natural to ask if the previous dichotomy holds for the diam-mean notions on CA. We will show this has a false answer. 

Let $T:A^{\ZZ}\rightarrow A^{\ZZ}$ the CA from the previous subsection and $A_{3}=\{ a, b , c \}$. We define $T_{3}:A_{3}^{\ZZ}\rightarrow A_{3}^{\ZZ}$ as
\begin{displaymath}
\begin{array}{rcl}
T_{3}x_{i} & = & \left\lbrace \begin{array}{lcl}
a & if & x_{i}=a ; \\
b  & if & x_{i}=c ;\\
c & if & x_{i}=b.
\end{array}  
\right.  
\end{array}
\end{displaymath}

\noindent

We set $A_{S}:= A\times A_{3}.$ For every $x\in A_S$, we have that $x^1$ is the component on $A$ and $x^2$ is the component on $A_3$. 
Let $id:A_{3}^{\ZZ}\rightarrow A_{3}^{\ZZ}$ be the identity function and $T_{S}:A_{S}^\ZZ \rightarrow A_{S}^\ZZ$ a CA defined locally with

\begin{displaymath}
\begin{array}{rcl}
	
T_{S}x_{i} & = &\left\lbrace \begin{array}{lcl}
(Tx^{1}_{i},id(x^{2}_{i})) & \text{ if }  \gamma_{2}(x^{1}_{i})=\flechai
;\\	
	
(Tx^{1}_{i},T_{3}x^{2}_{i}) & \text{otherwise.}  

\end{array}

\right.  
\end{array}
\end{displaymath}
Thus, on $A$, $T_S$ behaves exactly as $T$, and on $A_3$, as $T_3$ except if there is an arrow on the first coordinate. When this happens, the periodicity on $b$ and $c$ changes phase. 




In the next lemma, we will establish that the set of diam-mean equicontinuity points is not dense. 

\begin{lemma}\label{palabra S finito}
Let $m>0$ and $w\in A_{S}^{m}$ such that $w_{0}=(\vacio, b)$. Then, there exist $x,y\in A_{S}^{\ZZ}$ such that 
$$x_{[0,|w|-1]}=y_{[0,|w|-1]}=w$$ 
and the set  
$$\ZZ_{n\geq 0}\setminus \{n\in \ZZ_{n\geq 0} : T_{S}^{n}x_{0}\neq T_{S}^{n}y_{0} \}$$ 
is finite. 
\end{lemma}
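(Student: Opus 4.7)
The plan is to construct $x,y \in A_{S}^{\ZZ}$ extending $w$ that differ only at a single far-right coordinate $i_0 \gg |w|$, where $y$ carries an extra $\flechai$. This extra arrow in $y$ will pass through position $0$ exactly once, creating a parity defect in the $A_3$-coordinate at position $0$ that persists for all large times.

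First I would define $x$ to be the trivial extension of $w$ by $(\vacio,a) \in A_S$ outside $[0,|w|-1]$. Since $\gamma_{1}(x^{1}_{|w|})=\vacio$ and $\gamma_{2}(x^{1}_{i})=\vacio$ for $i\geq |w|$, Lemma~\ref{palabrasinfantasmas} together with Proposition~\ref{Siempre hay una salida} implies that the finitely many arrows originating in $w$ all drift strictly leftward through position $0$, so the set $A_{x}:=\{n\geq 0:\gamma_{2}(T^{n}x^{1}_{0})=\flechai\}$ is finite. Fix $i_{0}>\max A_{x}+|w|$ and define $y$ to agree with $x$ everywhere except $y_{i_{0}}=(\flechai,a)$. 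Because $\gamma_{1}(\flechai)=\vacio$ as an element of $A$, the $\gamma_{1}$-patterns of $x$ and $y$ coincide, so the $T_{1}$ evolution is identical in both; in particular $\gamma_{1}(T^{n}x^{1}_{0})=\gamma_{1}(T^{n}y^{1}_{0})=\vacio$ for every $n$.

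Since $T_{S}$ has radius one, a light-cone argument yields $T_{S}^{n}x_{0}=T_{S}^{n}y_{0}$ for every $n<i_{0}$, and hence $A_{y}\cap [0,i_{0})=A_{x}$, where $A_{y}:=\{n\geq 0:\gamma_{2}(T^{n}y^{1}_{0})=\flechai\}$. The extra arrow in $y$ drifts leftward: at speed one through the $\vacio$-background outside the window, and without permanent stall inside the window by Proposition~\ref{Siempre hay una salida}. Hence it reaches position $0$ at a unique time $\tau_{y}\geq i_{0}$ and then leaves to negative coordinates forever. Thus $A_{y}=A_{x}\sqcup \{\tau_{y}\}$ and $|A_{y}|=|A_{x}|+1$.

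To conclude, iterating the $T_{S}$-rule on the $A_{3}$-coordinate gives $(T_{S}^{n}x)^{2}_{0}=T_{3}^{\phi_{x}(n)}(b)$, where $\phi_{x}(n):=n-|A_{x}\cap [0,n)|$, and similarly for $y$. For $n>\tau_{y}$ we have $\phi_{x}(n)-\phi_{y}(n)=|A_{y}|-|A_{x}|=1$, which is odd, so $(T_{S}^{n}x)^{2}_{0}\neq (T_{S}^{n}y)^{2}_{0}$ since $T_{3}$ swaps $b$ and $c$. Combined with the fact that the first coordinates of $T_{S}^{\tau_{y}}x_{0}$ and $T_{S}^{\tau_{y}}y_{0}$ themselves differ, this gives $T_{S}^{n}x_{0}\neq T_{S}^{n}y_{0}$ for every $n\geq \tau_{y}$, so the complement set lies in $[0,\tau_{y})$ and is finite. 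The main obstacle is guaranteeing the $+1$ arrow-count is exactly one; this requires both the light-cone bound and the choice $i_{0}>\max A_{x}$, so that every arrow of $w$ has already cleared position $0$ before the extra arrow arrives.
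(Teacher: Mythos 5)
Your proof is correct and follows essentially the same strategy as the paper's: both arguments append a single extra $\flechai$ to the right of $w$ (the paper places it immediately after $w$ inside $x$; you place it far away inside $y$), use Lemma~\ref{palabrasinfantasmas} to clear the arrows of $w$ first, and then exploit the one extra passage of that arrow through coordinate $0$ to permanently desynchronize the $b$/$c$ oscillation. Your parity count via $\phi_x(n)=n-|A_x\cap[0,n)|$ is just a cleaner bookkeeping of the paper's two-case analysis at the moment the arrow reaches coordinate $0$, and you correctly isolate the one genuine subtlety (choosing $i_0>\max A_x+|w|$ so the extra arrow cannot merge with a $w$-arrow before clearing position $0$).
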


\begin{proof}
Let $w\in A_{S}^{m}$ such that $w_{0}=(\vacio, b)$.  Let us define $$x=\infinito(\vacio, a ).w(\flechai , a )(\vacio, a)^{\infty}$$ and $$y= \infinito(\vacio,a).w(\vacio, a)^{\infty}.$$ Using Lemma \ref{palabrasinfantasmas} we can assume, without loss of generality (by waiting until the arrows are gone), that $$
w_{i}\in \{ (p,q) : p\in \{ \vacio, \cero , \uno , \dos\} \wedge q\in A_{3} \}
.$$ Now, there exists $N>0$ such that $T_{S}^{N}x_{0}=(\flechai, q)$, where $q\in \{ b, c \}$. We have two cases to prove.
\begin{itemize}
\item[ \ ] Case 1: $T_{S}^{N}x_{0}=(\flechai, b)$. 

This implies that $T_{S}^{N+1}x_{0}=(\vacio, b)$. Meanwhile,  $T_{S}^{N+1}y_{0}=(\vacio , c)$. Therefore, we can easily see that $T_{S}^{N+i}x_{0}\neq T_{S}^{N+i}y_{0}$, for all $i>0$.

\item[ \ ] Case 2: $T_{S}^{N}x_{0}=(\flechai,c)$.

 Again we have that $T_{S}^{N+1}x_{0}=(\vacio, c )$, so $T_{S}^{N+i}x_{0}\neq T_{S}^{N+i}y_{0}$ for all $i\geq 0$. 
\end{itemize}  
\end{proof}


Notice that for all $\e>0$, any $y\in B_{\e}(x)$, where $x_{0}=(\vacio,b)$, is not a diam-mean  equicontinuity point.




\begin{theorem}
	\label{thm:2}
$(A_{S}^{\ZZ}, T_{S})$ is neither diam-mean sensitive and nor almost diam-mean equicontinuous.
\end{theorem}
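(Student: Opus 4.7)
The two assertions will be proved separately, each by invoking results previously established for $(A^{\ZZ}, T)$. For the failure of almost diam-mean equicontinuity, I will use Lemma \ref{palabra S finito} to exhibit a non-empty open cylinder containing no diam-mean equicontinuity point. For the failure of diam-mean sensitivity, I will embed a diam-mean equicontinuity point of $T$ (given by Proposition \ref{punto-d-m-e-propo}) into $A_S^{\ZZ}$ using the constant configuration $a^{\ZZ}$ in the second coordinate, which is invariant under both $T_3$ and the identity.

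For the first part, fix $z \in A_S^{\ZZ}$ with $z_0 = (\vacio, b)$ and an arbitrary $k > 0$. Setting $w = z_{[-k,k]}$, Lemma \ref{palabra S finito} yields $x, y \in B_k(z)$ with $T_S^n x_0 \neq T_S^n y_0$ for all but finitely many $n$. A disagreement at coordinate $0$ contributes distance $1$, so $\diam(T_S^n B_k(z)) \geq 1$ cofinitely often, and $\limsup_n \frac{1}{n+1}\sum_{i=0}^n \diam(T_S^i B_k(z)) \geq 1$. Hence $z \notin EQ$. Since this holds for every $z$ in the non-empty open cylinder $\{z \in A_S^{\ZZ} : z_0 = (\vacio, b)\}$, the set $EQ$ is not dense, and by Proposition \ref{pro sigma} it is not residual.

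For the second part, let $x^1 \in A^{\ZZ}$ be a diam-mean equicontinuity point of $(A^{\ZZ}, T)$ provided by Proposition \ref{punto-d-m-e-propo} and set $x = (x^1, a^{\ZZ})$. Because $T_3(a) = a$ and the identity also fixes $a$, the second component of the local rule of $T_S$ preserves the symbol $a$ regardless of the first component. An easy induction on $n$ then shows that for every $m' > 0$, every $y \in B_{m'}(x)$, every $|j| \leq m'$, and every $n \geq 0$, we have $(T_S^n y)^2_j = a$. Consequently, for any $y, z \in T_S^n B_{m'}(x)$, the second coordinates coincide on $[-m', m']$, so any disagreement inside $[-m', m']$ comes purely from the first coordinate, while disagreements outside $[-m', m']$ contribute at most $2^{-m'}$ to the distance. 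This yields
\begin{displaymath}
\diam(T_S^n B_{m'}(x)) \leq \max\bigl(2^{-m'},\ \diam(T^n B_{m'}(x^1))\bigr).
\end{displaymath}
Averaging over $n$ and applying Proposition \ref{punto-d-m-e-propo} makes the right-hand side arbitrarily small as $m' \to \infty$, showing that $x$ is a diam-mean equicontinuity point of $T_S$; this immediately rules out diam-mean sensitivity. The only real subtlety is confirming that the second coordinate stays frozen at $a$ throughout the iteration, which follows directly from $T_3(a) = a$.
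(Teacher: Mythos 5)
Your proof is correct and follows essentially the same route as the paper: Lemma \ref{palabra S finito} kills density of $EQ$ on the cylinder of points with $(\vacio,b)$ at the origin, and the point of Proposition \ref{punto-d-m-e-propo} paired with the frozen configuration $a^{\ZZ}$ defeats diam-mean sensitivity. Your second half is in fact more detailed than the paper's (which simply asserts the lifted point is a diam-mean equicontinuity point); the only cosmetic slip is that Lemma \ref{palabra S finito} requires $(\vacio,b)$ at position $0$ of $w$, so one should take $w=z_{[0,k]}$ and use shift-equivariance together with the fact that $T_{S}^{n}z_{0}$ depends only on $z_{[0,n]}$ to place the constructed points inside $B_{k}(z)$ --- the same level of informality the paper itself allows.
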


\begin{proof}
From Lemma \ref{palabra S finito} we conclude that for every $x\in A_{S}^{\ZZ}$ such that $x_{0}=(\vacio,b)$, we have that $x$ is not a diam-mean equicontinuity point.  
	
From Proposition \ref{punto-d-m-e-propo}, we have that
$$x:= \infinito(\vacio,a) . (\vacio ,a) (\cero,a)  (\vacio,a) (\cero,a)^{2} (\vacio,a) (\cero,a)^{2^{2}} \cdots (\vacio,a)(\cero,a)^{2^{l}} \cdots$$
is a diam-mean equicotinuity point. Hence, for all $\e>0$ there exists $l\geq 0$ such that
$$\limsup_{n\to \infty}\frac{\sum_{j=0}^{n}\diam (T_{S}^{j}(B_{l+\sum_{i=0}^{l}2^{i}}(x))) }{n+1}<\e .$$
Therefore, $(A^{\ZZ}_{S},T_{S})$ is not diam-mean sensitive. 
\end{proof}

\bibliographystyle{plain}
\bibliography{ref}

\medskip

\begin{itemize}
	\item \emph{L. de los Santos Ba\~nos, Instituto de F\'isica, Universidad Aut\'onoma de San Luis Potos\'i, Mexico luguis.sb.25@gmail.com}
	\item \emph{F. Garc\'ia-Ramos, CONACyT \& Instituto de F\'isica, Universidad Aut\'onoma de San Luis Potos\'i, Mexico fgramos@conacyt.mx}
\end{itemize}
\end{document}